\theoremstyle{plain}
\newtheorem{theorem}{Theorem}
\newtheorem*{theorem*}{Theorem}
\numberwithin{theorem}{section}
\newtheorem{proposition}[theorem]{Proposition}
\newtheorem{example}[theorem]{Example}
\newtheorem{lemma}[theorem]{Lemma}
\newtheorem{definition}[theorem]{Definition}
\newtheorem{remark}[theorem]{Remark}
\newtheorem{conjecture}[theorem]{Conjecture}
\theoremstyle{definition}
\theoremstyle{remark}
\begin{document}

\title{Bitangents of tropical plane quartic curves} 
\author{Matt Baker, Yoav Len, Ralph Morrison, Nathan Pflueger, Qingchun Ren}

 \keywords{Tropical geometry, Tropical curves, algebraic geometry, plane quartics}
\subjclass[2000]{14T05, 14H45, 14H50, 14H51}

\begin{abstract}
We study smooth tropical plane quartic curves and show that they satisfy certain properties analogous to (but also different from) smooth plane quartics in algebraic geometry. For example, we show that every such curve admits either infinitely many or exactly 7 bitangent lines.  We also prove that a smooth tropical plane quartic curve cannot be hyperelliptic. 
\end{abstract}

\maketitle

\begin{section}{Introduction}
In classical algebraic geometry, smooth plane quartics are the simplest examples of algebraic curves that are not hyperelliptic, as well as the simplest examples of canonically embedded curves.  The study of the enumerative geometry of these curves dates back to at least 1834, when Pl\"{u}cker \cite{Plucker} showed that a smooth plane quartic curve $C$ over $\mathbb{C}$ has $28$ bitangent lines.
Our main results in this paper are the following two theorems, which show that smooth tropical plane quartics behave  similarly to their algebraic counterparts:

\begin{theorem*}(\ref{bitangent_theorem})
Every smooth tropical plane quartic curve admits exactly 7 equivalence classes of bitangent lines.
\end{theorem*}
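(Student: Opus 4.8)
The plan is to identify equivalence classes of bitangent lines with \emph{effective theta characteristics} of the underlying genus $3$ tropical curve, and then to count the latter. Recall that a tropical line $L$ is a balanced one-dimensional fan with a single vertex and three rays, and that by the tropical B\'ezout theorem its stable intersection with a smooth quartic $C$ is an effective divisor of degree $4$. I would declare $L$ to be \emph{bitangent} to $C$ when this intersection divisor is \emph{even}, i.e.\ of the form $2D$ for an effective divisor $D = p+q$ of degree $2$ (allowing $p=q$, so that $L\cdot C$ is $2p+2q$ or $4p$). Since line sections of a canonically embedded quartic cut out the canonical class, any such $L$ satisfies $2D \sim L\cdot C \sim K_C$, so that $D$ is an effective theta characteristic: a degree-$2$ class $[D]$ with $2[D] = [K_C]$. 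I would then define two bitangents to be equivalent precisely when they determine the same class $[D]$; this collapses the (possibly infinite) families of parallel bitangents sharing a tangency divisor into a single class, which is exactly what makes the count finite and accounts for the ``infinitely many or exactly $7$'' dichotomy of the abstract.

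With this dictionary in place, the theorem reduces to two assertions: (i) the map sending a bitangent $L$ to its theta characteristic $[D]$ descends to a bijection from equivalence classes of bitangents onto the \emph{effective} theta characteristics of $C$, and (ii) a smooth genus $3$ tropical curve has exactly $7$ effective theta characteristics. For (ii) I would use the tropical Jacobian: $\Pic^0(C)$ is a real torus of dimension $g=3$, so its $2$-torsion $\Pic^0(C)[2] \cong (\mathbb{Z}/2\mathbb{Z})^3$ has exactly $8$ elements, and the theta characteristics form a torsor over this group, giving $8$ classes in all. The content is then to show that exactly one of these $8$ classes fails to be effective (has empty linear system) while the remaining $7$ are represented by effective divisors of degree $2$. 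This is the genus-$3$ instance of the general pattern of $2^g - 1$ effective theta characteristics on a metric graph, which I would establish by a direct analysis of reduced divisors, breaking into the finitely many combinatorial types of trivalent genus-$3$ graphs (such as $K_4$ and its relatives) underlying a smooth tropical quartic.

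The main obstacle is assertion (i), the realizability and the matching of equivalence classes. I must show that \emph{every} effective theta characteristic $[D]$ is actually cut out by a genuine tropical bitangent line, that the tangency is realized by the stable intersection and not merely by the numerical relation $2[D] = [K_C]$, and that distinct effective classes give distinct and nonempty classes of lines. Concretely, given effective $D = p+q$ one must build a tropical line meeting $C$ with even multiplicity at $p$ and $q$; the difficulty is that the position of the vertex of $L$ and the combinatorics of where its three rays cross the edges of $C$ have to be compatible with doubling the intersection multiplicities, and one must check that the non-effective eighth theta characteristic corresponds to no such line. I expect the bulk of the work, and the most delicate case analysis, to lie here, with the counting in (ii) following more formally once the structure of $\Pic^0(C)[2]$ and the effectivity criterion are in hand.
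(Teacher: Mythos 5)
Your overall strategy---identifying equivalence classes of bitangents with effective theta characteristics and counting the latter as $2^g-1=7$---is the same as the paper's, but two steps you treat as routine are genuine gaps, and one of them is the actual content of the theorem. The first: you justify $2D \sim L\cdot C \sim K_C$ by saying that ``line sections of a canonically embedded quartic cut out the canonical class.'' This is precisely the classical argument that fails tropically: a smooth tropical plane quartic is \emph{not} canonically embedded in the strong sense (not every canonical divisor is a line section; the standard representative of $K_C$ generally lies on no tropical line), a point the paper flags in its introduction as the reason the classical proofs break down. The statement is still true, but it needs an independent argument: the paper proves it (Lemma~\ref{lineSectionsAreCanonicals}) by first showing any two line sections are linearly equivalent (Lemma~\ref{lemma:folklore}), then showing a line section has rank at least $2$ by passing tropical lines through pairs of vertices of a loopless model and invoking Luo's theorem on rank-determining sets, and finally applying tropical Riemann--Roch.

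The second and more serious gap is your assertion (i), that every effective theta characteristic is actually cut out by a tropical bitangent line, together with the verification that the tangency is realized by the stable intersection. You explicitly defer this (``I expect the bulk of the work \dots to lie here''), but without it your dictionary gives only an injection from equivalence classes of bitangents into the $7$ effective theta characteristics---an upper bound, not an exact count. This realizability is the heart of the paper's proof (Proposition~\ref{keyproposition}), and it is not formal: given $P+Q$ and a line $L$ through $P$ and $Q$, one has $L\cdot C \sim 2P+2Q$, but upgrading linear equivalence to equality of divisors requires a trichotomy. Either $P+Q$ is rigid (immediate); or a chip is flexible, which by \cite{BN} can happen only on a bridge edge, whose slope in a smooth plane quartic is forced to be $-2$ or $-\frac{1}{2}$, which in turn forces intersection multiplicity $2$ there; or $P+Q$ moves only in tandem, which by Lemma~\ref{twoChips} forces $\{P,Q\}$ to be a $2$-edge-cut, occurring only for the Mickey Mouse and two-bridge types, where the paper constructs the bitangent by hand from the Newton subdivision (a line through the midpoint of the edge dual to the segment from $(1,1)$ to $(2,2)$, with a four-case check of the second tangency). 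That analysis in turn leans on the classification of all theta characteristics in Appendix~\ref{section:theta_chars} and on non-hyperellipticity (Theorem~\ref{hyperelliptic_theorem}). For your part (ii), citing Zharkov's theorem suffices (as the paper does via Lemma~\ref{thetaAlgorithm}); reproving it by reduced-divisor case analysis is unnecessary extra work. So your plan points at the right statements, but the step you set aside is the proof.
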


\begin{theorem*}(\ref{hyperelliptic_theorem})
Every smooth tropical plane quartic curve is non-hyperelliptic.
\end{theorem*}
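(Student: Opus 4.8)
The plan is to translate hyperellipticity into a property of the curve's skeleton and then to rule that property out using the lattice geometry of the Newton triangle. Recall that a smooth tropical plane quartic is dual to a unimodular triangulation of $T = \mathrm{conv}\{(0,0),(4,0),(0,4)\}$; its three interior lattice points $(1,1)$, $(1,2)$, $(2,1)$ make it a curve of genus $3$, and smoothness forces the skeleton $\Gamma$ to be a trivalent metric graph of genus $3$. I would work with the characterization that $\Gamma$ is hyperelliptic precisely when it carries a divisor of degree $2$ and rank $1$, which by the theory of harmonic morphisms of tropical curves is equivalent to the existence of a degree-$2$ harmonic morphism from $\Gamma$ onto a metric tree; concretely this means $\Gamma$ admits an isometric involution folding every cycle, with quotient a tree.

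Next I would invoke the classification of trivalent genus-$3$ graphs: there are exactly five, distinguished by their numbers of loops, of pairs of parallel edges, and of bridges, ranging from $K_4$ (no loops, no bridges) to the graph of three loops attached to a single vertex. The crucial observation is that one involution must fold all three cycles simultaneously. A loop folds under the reflection fixing its base vertex for any edge length, but a \emph{bigon} (a pair of parallel edges) can be folded only if its two edges have equal length, or else if the involution exchanges its two endpoints, and this in turn forces further metric symmetries across the rest of the graph. Thus $K_4$ is never hyperelliptic, the three-loop graph is always hyperelliptic, and the intermediate types are hyperelliptic exactly when their edge lengths satisfy certain equalities. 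The theorem therefore reduces to showing that the skeleton of a smooth tropical plane quartic is never the three-loop graph, and that whenever $\Gamma$ contains a loop or a bigon the lengths produced by the triangulation never meet the symmetry conditions above.

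The geometric input that should drive both points is that the three interior lattice points of $T$ are \emph{not collinear}: they are the vertices of a unimodular triangle. Under the duality between $\Gamma$ and the triangulation, each interior point contributes one of the three independent cycles of $\Gamma$, and their two-dimensional, mutually adjacent arrangement forces the three cycles to be pairwise linked rather than strung out along a chain or hung off a single vertex; this rules out the most degenerate, loop-heavy types and constrains how any loop or bigon that does occur is attached. The main obstacle, and the heart of the argument, is then the metric bookkeeping in the remaining bigon cases: one must read the lengths of the parallel edges of $\Gamma$ off the lattice lengths in the triangulation and verify, uniformly over all unimodular triangulations of $T$ and over all edge lengths that are actually realizable by a tropical quartic, that a folding involution can never exist. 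Establishing that the realizable metrics avoid this hyperelliptic symmetry locus is the step I expect to require the most care.
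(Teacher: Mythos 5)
Your overall strategy matches the paper's in outline --- reduce to the five trivalent genus-$3$ combinatorial types, observe that hyperellipticity of the realizable types is a metric condition (the honeycomb/$K_4$ is never hyperelliptic, the lollipop never occurs as a skeleton), and then argue that the edge lengths actually realizable by a smooth tropical quartic never satisfy that condition. But the proposal stops exactly where the proof has to start: you write that one must ``read the lengths of the parallel edges of $\Gamma$ off the lattice lengths in the triangulation and verify, uniformly over all unimodular triangulations of $T$\dots that a folding involution can never exist,'' and you defer this as ``the step I expect to require the most care.'' That step \emph{is} the theorem; without it you have a plan, not a proof. Moreover, the geometric input you propose to drive it --- that the three interior lattice points of $T$ are not collinear --- is not what makes the argument work (non-collinearity is used elsewhere, in ruling out the lollipop as a skeleton), and a brute-force check ``uniformly over all unimodular triangulations'' is not feasible as stated, since the triangulation does not determine the edge lengths: the lengths depend on the coefficients of the defining tropical polynomial, so one must argue over an infinite family of metrics for each of the thousands of triangulations.

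The paper closes this gap with two ideas you are missing. First, instead of the full involution/harmonic-morphism characterization, it uses a much lighter necessary condition (its Lemma on $2$-edge-cuts): if a hyperelliptic metric graph has a $2$-edge-cut, the two edges of the cut must have \emph{equal} length. Since each of the three possibly-hyperelliptic types (Mickey Mouse, one-bridge, two-bridge) contains such a cut on the cycle dual to the interior point $(1,1)$, it suffices to show the two cut edges always have different lengths. Second, the length comparison is done not by bookkeeping lattice lengths case by case, but by a geodesic argument that is uniform in the unknown coefficients: the cut edge $e_1$ dual to the segment from $(1,1)$ to $(2,2)$ is a single straight segment, hence a Euclidean geodesic between its endpoints, while the other cut edge $e_2$ is a concatenation of at least two segments whose endpoints are at least as far apart as those of $e_1$ (this is where the positions of the edges emanating from $(1,1)$ in the triangulation enter). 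Hence $e_2$ is strictly longer than $e_1$ for \emph{every} realizable metric, the cut edges are never equal, and no smooth tropical plane quartic is hyperelliptic. If you want to complete your proposal, you should either prove this geodesic comparison or find a substitute for it; the involution formalism alone will not produce it.
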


See Definitions~\ref{def:bitangent} and \ref{def:equivalent_bitangents} for the precise meaning of bitangent lines and equivalence classes thereof, and see Definition \ref{hyperelliptic_definition} for the meaning of ``hyperelliptic'' in the tropical setting.

\medskip

Many classical arguments regarding smooth algebraic plane quartic curves break down in the tropical setting because smooth tropical quartics are not canonically embedded in the strongest possible sense: while the stable intersection with any tropical line gives a canonical divisor (cf. Lemma \ref{lineSectionsAreCanonicals}), it is {\bf not} the case that every canonical divisor arises in this way. For example, the standard representative of the canonical divisor of a smooth tropical plane quartic (when viewed as a metric graph) does not generally lie on a tropical line. Thus many standard algebro-geometric arguments 
are invalid tropically.  However, a combination of geometric results and case-by-case analysis will allow us to prove Theorems \ref{bitangent_theorem}
and Theorem \ref{hyperelliptic_theorem}.

\medskip

We make use of both the ``abstract'' and ``embedded'' points of view on tropical algebraic geometry in this paper.  For example, we employ the tropical Riemann-Roch theorem (a statement about abstract metric graphs), as well as Luo's theory of rank-determining sets of metric graphs and Zharkov's results on theta characteristics.  On the other hand, we also make use of Newton polygons, the balancing condition, and elementary facts from tropical intersection theory, all of which concern {\bf embeddings} of tropical curves in the plane.

\medskip

Although we do not study such questions in this paper, we are also interested in the relationship between algebraic curves and their tropicalizations.  If $K$ is an algebraically closed and non-trivially valued non-Archimedean field, the tropicalization map sends the points of a smooth plane curve $X=V(f)$ in $(K^*)^2$ into $\mathbb{R}^2$ by coordinate-wise valuation.  The Euclidean closure of the image of $X$ in $\mathbb{R}^2$ is denoted $\text{Trop}(X)$, and when endowed with an appropriate weight function it is a plane tropical curve as defined in Definition \ref{definition:tropical_curve}.  By a theorem of Kapranov, the tropical curve $\text{Trop}(X)$ is dual to the subdivision of the Newton polygon of $f$ induced by the valuations of the coefficients of $f$.  
When $X$ is a smooth plane quartic for which $\text{Trop}(X)$ is tropically smooth, it is natural to guess that the 28 bitangents on $X$ specialize in groups of 4. More precisely, we make the following conjecture: 
\begin{conjecture}
Let $X$ be a smooth plane quartic and assume that $\text{Trop}(X)$ is tropically smooth. Then each odd theta characteristic of $\Gamma$ is the specialization of four effective theta characteristics of $X$ (counted with multiplicity).  
\end{conjecture}

We remark that Chan and Jiradilok have recently confirmed the conjecture for curves whose skeleton is the first tropical curve pictured in Figure \ref{four_example_curves} \cite{CJ}.


\end{section}

\subsection*{Acknowledgements}
This paper arose from the first author's project group at the 2013 AMS Math Research Communities workshop on Tropical and Non-Archimedean Geometry in Snowbird, Utah. The authors are grateful to the AMS for their support of this program.  Thanks to Bernd Sturmfels for his encouragement of this project, and to Johannes Rau for helpful comments. We also thank the referees for their insightful remarks.  The first author was supported in part by the NSF grant DMS-1201473.

\begin{section}{Preliminaries}
We briefly review some of the basic definitions of tropical geometry that will be used in this paper. See \cite{Gathmann, FirstSteps} for more details.
 
\begin{definition}\label{definition:tropical_curve}
A \emph {plane tropical curve} is a planar graph $C$, together with positive integer weights on the edges, such that:
\begin{enumerate}
\item The slope of each edge is rational.
\item At each vertex, the following balancing condition holds: the weighted sum of the primitive integral vectors of the edges around the vertex is zero.
\end{enumerate}
A plane tropical curve is \emph{smooth of degree $d$} if it is dual to a unimodular triangulation of the triangle with vertices $(0,0), (d,0), (0,d)$.  Here, \emph{unimodular} means that it is subdivided to $d^2$ triangles each having area $\frac{1}{2}$.
\end{definition}

There are alternate definitions of plane tropical curves, including those that live in \emph{modified tropical planes} \cite{MikApp}.  Here, however, we use the above definition, so all plane tropical curves are subsets of $\mathbb{R}^2$.
In particular, a smooth plane tropical curve of degree $d$ is a trivalent graph with $d$ infinite edges pointing in each of the directions $(1,1), (-1,0), (0,-1)$, with
the weight of each edge equal to $1$. See Figure \ref{four_example_curves} for some examples with $d=4$. 
 Although plane tropical curves are unbounded subsets of $\mathbb{R}^2$, much of the relevant data is contained in a compact subset of the curve, which we now introduce.

\begin{definition}  The \emph {skeleton} of a plane tropical curve is the subgraph obtained by contracting all the leaf edges.
\end{definition}

To distinguish between a tropical line and a standard line in $\mathbb{R}^2$, we will refer to a standard line as \emph{Euclidean}. 

\begin{subsection}{Smooth tropical plane quartics}\label{quartics}
A smooth tropical plane quartic curve is obtained as the dual graph of a unimodular triangulation of the standard triangle $T$ with vertices $(0,0), (0,4), (4,0)$. Each interior lattice point gives rise to a cycle in the dual graph, and the standard triangle contains three integer points in its interior, so the genus of the tropical curve (i.e., the genus of its skeleton considered as a metric graph) is three. 

\begin{figure}[hbt]
\begin{center}
\includegraphics[
height=1.4in
]{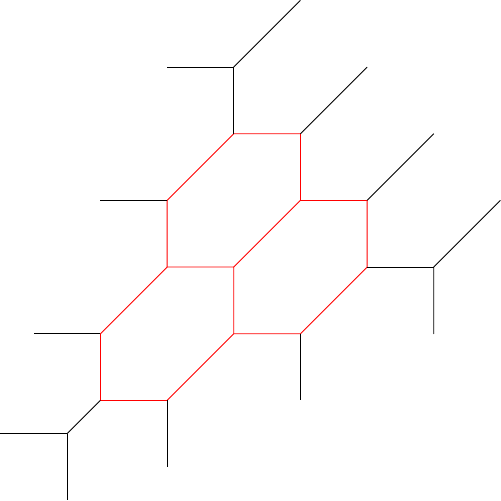}
\includegraphics[
height=1.4in
]{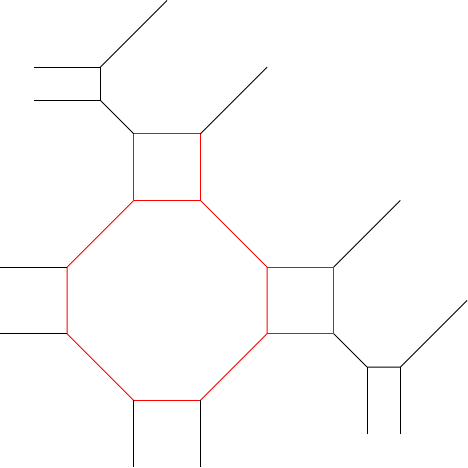}
\includegraphics[
height=1.4in
]{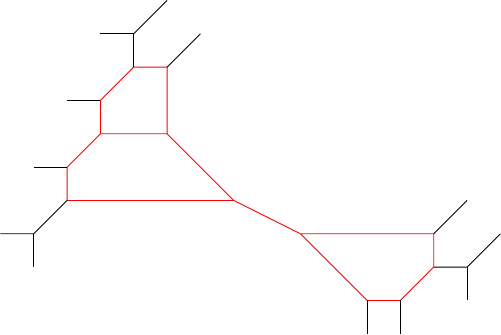}
\includegraphics[
height=1.4in
]{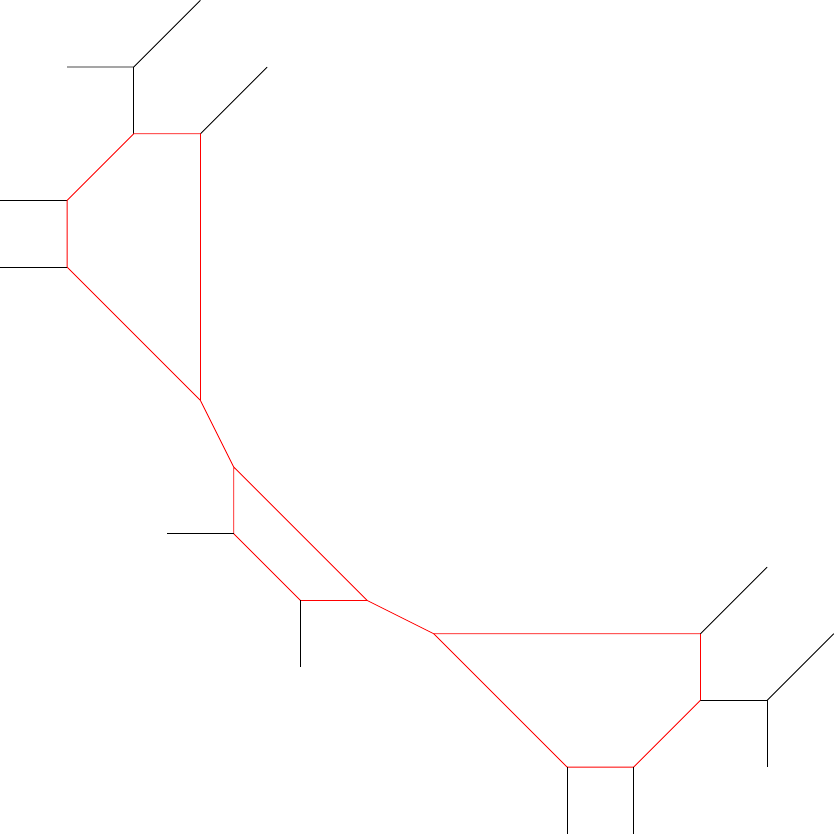}
\newline\includegraphics[
height=1in
]{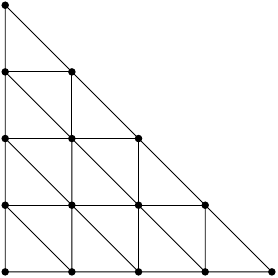}
\includegraphics[
height=1in
]{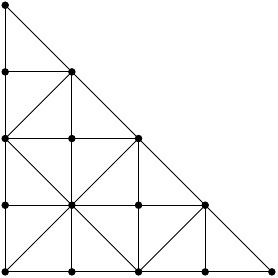}
\includegraphics[
height=1in
]{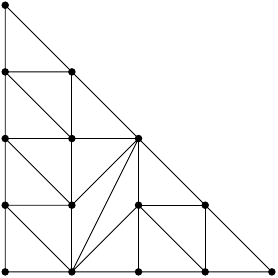}
\includegraphics[
height=1in
]{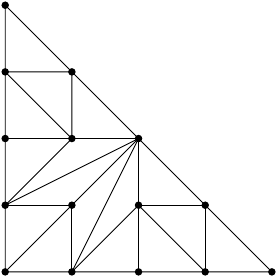}
\end{center}
\caption{Smooth tropical plane quartic curves and the dual triangulations.  From left to right, we have instances of tropical curves which we will refer to as the honeycomb, Mickey Mouse, one-bridge, and two-bridge. The skeletons are colored in red.}
\label{four_example_curves}
\end{figure}

As we will see in Proposition \ref{types}, there are four basic types of smooth tropical plane curves, examples of which are shown in Figure \ref{four_example_curves} along with the corresponding unimodular triangulations.  We will affectionately refer to these four types as honeycomb, Mickey Mouse, one-bridge, and two-bridge, names that are clear when contracting all the infinite edges and leaves.  


\begin{proposition}\label{types}
Up to homeomorphism, there are precisely five connected trivalent planar graphs of genus three having no leaf edges. 
Of these, precisely four of the homeomorphism classes are realizable as skeletons of smooth tropical plane quartic curves.
\end{proposition}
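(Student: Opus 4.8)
The plan is to split the proposition into two independent combinatorial tasks. First, classify all trivalent planar graphs of genus three with no leaf edges up to homeomorphism (equivalently, as topological graphs where we forget edge lengths); then, for each of the five resulting types, decide whether it can occur as the skeleton of a smooth tropical plane quartic, i.e.\ as the dual graph of a unimodular triangulation of the standard triangle $T$ with vertices $(0,0),(0,4),(4,0)$, after contracting all leaf edges.

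For the classification, I would use a purely graph-theoretic count. A trivalent (cubic) graph of genus $g$ with first Betti number $b_1 = g$ has, by the handshake/Euler relations, a fixed number of vertices and edges: with every vertex of degree $3$ and $b_1 = E - V + 1 = 3$, together with $3V = 2E$, one gets $V = 4$ and $E = 6$. Since I may contract edges freely up to homeomorphism, I reduce to multigraphs on the minimal number of vertices and enumerate the possible cubic genus-three graphs by hand, recording which admit a planar embedding. I expect exactly five planar homeomorphism types to survive; the standard list consists of the ``theta-like'' $K_4$ graph (honeycomb), the graph with two separating bridges, the one-bridge graph, the chain of three loops (Mickey Mouse), and one additional type (the one that will turn out to be non-realizable). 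The bookkeeping here is routine but must be done carefully to be sure the list is complete and that planarity is correctly assessed for each candidate.

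For the realizability half, the plan is to exhibit, for four of the five types, an explicit unimodular triangulation of $T$ whose dual graph has the desired skeleton after contracting leaves; the four examples pictured in Figure~\ref{four_example_curves} already supply these, so this direction amounts to verifying that those four triangulations are unimodular and that their dual skeletons realize the four claimed homeomorphism classes. The substantive content is the \textbf{non-realizability} of the fifth type, and I expect this to be the main obstacle. Here I would argue by constraints coming from the embedding: the skeleton of a smooth tropical plane quartic sits inside $\mathbb{R}^2$ as a balanced weight-one graph dual to a triangulation of $T$, and each of the three bounded cycles must enclose one of the three interior lattice points of $T$. The non-realizable graph is distinguished by its cyclic/bridge structure, and I would show that the planar arrangement forced by the three interior lattice points and the balancing condition is incompatible with that structure.

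Concretely, the hard step is to translate the abstract graph's topology into a statement about how the three interior lattice points $(1,1),(1,2),(2,1)$ of $T$ can be separated by the dual subdivision, and to derive a contradiction for the fifth type. I anticipate that the forbidden graph is the one in which the three loops are arranged so that no planar dual triangulation can simultaneously satisfy the balancing condition and the requirement that each cycle contains exactly one interior point; a careful case analysis on the possible positions of the bounded edges relative to these three points should rule it out. Once that incompatibility is established, combining it with the four explicit realizations completes the proof.
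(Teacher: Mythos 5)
Your classification half takes the same route as the paper (Euler's formula plus the handshake identity force $V=4$ and $E=6$, followed by an enumeration of trivalent multigraphs), and that method is sound; but the list you write down is wrong, not merely unchecked. ``The graph with two separating bridges'' and ``the chain of three loops'' are the \emph{same} graph (loop--bridge--doubled edge--bridge--loop, i.e.\ the two-bridge type), and it is not the Mickey Mouse: the Mickey Mouse graph is a $4$-cycle with two opposite edges doubled, so that a central cycle shares one edge with each of two $2$-cycles, and it is absent from your list. You also never pin down the fifth graph, the one that must be excluded: three loops, each attached by a bridge to a single common trivalent vertex (the paper's ``lollipop''). Since the second half of the argument depends on knowing exactly which graph is forbidden and what structural feature distinguishes it, this is more than a bookkeeping slip. (Incidentally, planarity is a vacuous filter here: a multigraph on $4$ vertices contains no $K_5$ or $K_{3,3}$ subdivision, so all five types are planar and the count must come from the enumeration itself.)

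The genuine gap is the non-realizability half, which you reduce to ``a careful case analysis on the possible positions of the bounded edges,'' i.e.\ you do not give an argument; and the invariants you propose cannot detect the obstruction. The conditions ``each bounded cycle encloses one interior lattice point'' plus balancing hold equally for the lollipop and for the \emph{realizable} two-bridge type: in both graphs the three cycles are pairwise edge-disjoint and each encloses one interior point, and the two types differ only in whether the connecting bridges form a star or a chain, so any correct proof must isolate exactly that feature. The paper does this through the dual dictionary: an edge of the triangulation joining two interior lattice points is dual to an edge shared by the two corresponding cycles, no two cycles of the lollipop share an edge, and (the paper asserts) every unimodular triangulation joins some two of $(1,1),(2,1),(1,2)$. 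Be aware that even this last assertion needs care, since case (iii) of Proposition~\ref{keyproposition} states that two-bridge triangulations have \emph{no} edges between interior points; a robust variant is to note that each bridge of the skeleton is dual to a chord of $T$ (an edge with both endpoints on $\partial T$) separating the interior points, so a lollipop skeleton would force three pairwise non-crossing chords, each cutting off exactly one of $(1,1),(2,1),(1,2)$ from the other two, and a short enumeration of the available chords (only $(3,0)$--$(0,2)$ and $(2,0)$--$(0,3)$ cut off $(1,1)$, only $(2,0)$--$(1,3)$ and $(1,0)$--$(2,2)$ cut off $(2,1)$, only $(0,2)$--$(3,1)$ and $(0,1)$--$(2,2)$ cut off $(1,2)$) shows no such triple exists. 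Your sketch contains neither the dictionary nor any identified obstruction, so the hard half of the proposition remains unproven.
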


\begin{proof}
Let $G$ be a trivalent planar graph of genus three. By Euler's formula, together with a counting argument, $G$ has four vertices and six edges. Suppose first that two of the vertices of $G$  are connected by more than one edge. Then by the trivalent condition, they must be connected by two edges, and the only possible graphs are the Mickey Mouse graph, the one-bridge graph, and the two-bridge graph.  

Next, assume that $G$ does not have multiple edges. Since the genus is three, it is not possible for each vertex to have a loop attached to it, so by trivalency there is at least one vertex that is connected to each of the others by an edge. Now, each vertex may have a loop attached to it, or it may be connected to other vertices, giving rise to the honeycomb graph and the \emph{lollipop graph}, pictured at the far right of Figure~\ref{genus_three_graphs}.

We now show that the lollipop graph is never the skeleton of a smooth tropical plane quartic curve.  If a graph arises from a unimodular triangulation, then every bridge that separates the graph to topologically non-trivial components is dual to a line segment in the subdivision that separates the polygon into two smaller polygons, each of which contains at least one interior lattice point.  Inspection reveals that there are six possible such splits for $T$: they are the segments joining the pairs of points $\{(0,2),(3,0)\}$, $\{(0,2),(3,1)\}$, $\{(2,0),(0,3)\}$, $\{(2,0),(1,3)\}$, $\{(2,2),(1,0)\}$, and $\{(2,2),(0,1)\}$.  However, no three of these splits may appear in the same triangulation due to intersections.  It follows that the lollipop graph, which has three bridges, does not arise as the skeleton of a smooth tropical plane quartic curve.
\end{proof}

\begin{figure}[hbt]
\label{fig:lollipop} 
\begin{center}
\includegraphics[
height=.5in
]{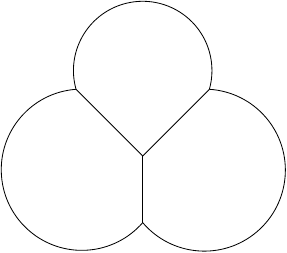}
\,\,\,\,\,\,\,\,\,\,
\includegraphics[
height=.4in
]{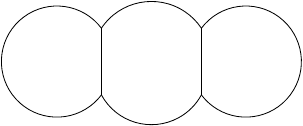}
\,\,\,\,\,\,\,\,\,\,
\includegraphics[
height=.4in
]{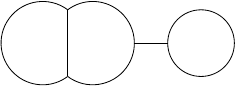}
\,\,\,\,\,\,\,\,\,\,
\includegraphics[
height=.4in
]{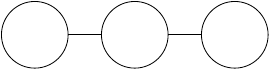}
\,\,\,\,\,\,\,\,\,\,
\includegraphics[
height=.5in
]{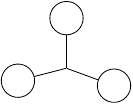}
\end{center}
\caption{The five homeomorphism classes of trivalent connected leafless genus $3$ graphs, the first four of which occur in smooth tropical plane quartics.}
\label{genus_three_graphs}
\end{figure}
 
 
 \begin{remark}
By a computation in TOPCOM \cite{Rambau:TOPCOM-ICMS:2002}, there are precisely 7422 regular unimodular triangulations of the standard triangle $T$ with vertices $(0,0), (4,0), (0,4)$.  Of the resulting 1278 $S_3$-equivalence classes (with respect to the natural action of $S_3$ on $T$),  573 are honeycomb, 450 are Mickey Mouse, 225 are one-bridge, and 30 are two-bridge.
\end{remark}

\end{subsection}

\begin{subsection}{Divisor theory on metric graphs}   
Next, we briefly recall the theory of divisors on tropical curves (see \cite{GK,MZ} for more details).  A \emph{metric graph} is a graph $\Gamma$ together with a function $\ell:E(\Gamma)\rightarrow\mathbb{R}_{>0}\cup\{\infty\}$ assigning a length to each edge, with infinite lengths only allowed on edges containing a one-valent vertex. 
A \emph {divisor} $D$ on a metric graph $\Gamma$ is an element of the free abelian group on $\Gamma$, i.e., a finite formal sum of points 
$$
D = a_1\cdot p_1 +\ldots +a_n\cdot p_n.
$$
The {\em degree} of $D$ is $ {\rm deg}(D):=a_1 + \ldots + a_n$, and $D$ is said to be \emph {effective} if all the coefficients are non-negative. 
We will sometimes refer to the points of a divisor as (virtual) \emph{chips}, so that $D$ has $a_i$ chips on $p_i$ for each $i$.

\medskip

A \emph {rational function} on $\Gamma$ is a continuous piecewise linear function with integer slopes. To a rational function $f$ we associate a divisor $\text{div}(f)$ whose degree at each point $p$ is the sum of the outgoing slopes of $f$ at $p$. A divisor of this form is called \emph {principal}. Note that the degree of a principal divisor is always zero. 
Two divisors $D$ and $D'$ are said to be {\em linearly equivalent} if $D-D' = \text{div}(f)$ for some rational function $f$. A divisor $D$ has \emph {rank} $r$ if for every effective divisor $E$ of degree $r$, $D-E$ is linearly equivalent to an effective divisor, and $r$ is the largest integer with this property.

\medskip

A \emph{canonical divisor} on $\Gamma$ is any divisor linearly equivalent to
$$K_\Gamma:=\sum_{v\in \Gamma}(\text{valence}(v)-2)\cdot v. $$
Its degree is $2g-2$, where $g$ is the \emph {genus} of $\Gamma$, i.e., the dimension of $H_1(\Gamma,{\mathbf R})$.

\medskip

The {tropical Riemann-Roch theorem} \cite{BN} asserts that if $\Gamma$ is a metric graph of genus $g$ and $D$ is any divisor on $\Gamma$, then 
\[
r(D) + r(K_\Gamma - D) = {\rm deg}(D) + 1 - g.
\]

\medskip

The skeleton of a smooth plane tropical curve can be viewed as a metric graph by letting the length of an edge $e$ be its {\em lattice length} (see \cite[Definition 3.4]{KMM} or \cite[Section 6.2]{BPR}).
In the same way, we can (more generally) consider \emph {any} connected compact subset of a smooth tropical plane curve as a metric graph.
The lattice lengths of the bounded edges in a tropical plane curve are {\bf not} determined solely by the subdivided Newton polygon; they depend on
the actual coefficients of a defining tropical polynomial.

\medskip

The definition of the metric on a smooth tropical plane curve is justified in part by the following folklore result.  It deals with a subgraph $\Sigma$ of a tropical curve $C$, which is a collection of edges in $C$ together with the adjacent vertices.

\begin{lemma} \label{lemma:folklore}
Let $C$ be a smooth tropical plane curve and let $L_1, L_2$ be tropical lines. Suppose that $\Sigma$ is a connected subgraph of $C$ that contains the skeleton and the stable intersections $L_1\cdot C$ and $L_2\cdot C$. Then  $L_1\cdot C$ and $L_2\cdot C$ are linearly equivalent as divisors on $\Sigma$.
\end{lemma}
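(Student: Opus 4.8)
The plan is to exhibit an explicit rational function on $\Sigma$ whose divisor is $(L_1\cdot C)-(L_2\cdot C)$, via the tropical Poincar\'e--Lelong principle. Write each line as the corner locus of a degree-one tropical polynomial: if $L_i$ has vertex $(p_i,q_i)$, take $\ell_i(x,y)=\max\{0,\, x-p_i,\, y-q_i\}$, whose corner locus is exactly the tropical line with vertex $(p_i,q_i)$ and rays in the directions $(1,1),(-1,0),(0,-1)$. Set $h=\ell_1-\ell_2$. This is a continuous piecewise-linear function on $\mathbb{R}^2$ with integer slopes, so its restriction $h|_\Sigma$ is a rational function on the metric graph $\Sigma$. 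The entire argument then reduces to the identity $\text{div}_\Sigma(h|_\Sigma)=(L_1\cdot C)-(L_2\cdot C)$, which immediately gives the desired linear equivalence.

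To establish this identity I would compute the outgoing slopes of $h|_C$ point by point. At a point $v$ that is a vertex of $C$ lying on neither $L_1$ nor $L_2$, the function $h$ is affine near $v$, so the outgoing slope of $h|_C$ along each edge equals $\nabla h$ dotted with the primitive direction of that edge; summing over the edges at $v$ and invoking the balancing condition gives total slope zero, hence no chip. At a point where $C$ crosses $L_i$, the function $\ell_i$ acquires a convex kink whose size, by a direct local computation, equals the lattice index $|\det(u,w)|$ of the primitive directions $u$ of $L_i$ and $w$ of $C$ there, which is precisely the stable intersection multiplicity of $L_i\cdot C$ at that point. Since $h=\ell_1-\ell_2$, the two lines contribute with opposite signs, so the part of $\text{div}_\Sigma(h|_\Sigma)$ supported on intersection points is exactly $(L_1\cdot C)-(L_2\cdot C)$. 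I would dispose of non-generic configurations (a crossing coinciding with a vertex of $C$) either by a small perturbation of the $\ell_i$, using that the stable intersection is invariant under such perturbation, or by appealing directly to the stable form of the Poincar\'e--Lelong identity.

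The remaining point, whose bookkeeping must be handled with care, is the behavior on the unbounded rays and at the boundary of $\Sigma$, and this is exactly where the hypotheses on $\Sigma$ are used. Along each of the three ray directions of $C$, a direct computation shows that $h=\ell_1-\ell_2$ is \emph{eventually constant}: in direction $(-1,0)$ the terms $x-p_i\to-\infty$ and each $\ell_i$ becomes constant, in direction $(0,-1)$ the symmetric statement holds, and in direction $(1,1)$ both $\ell_i$ have slope $1$ along the ray so their difference has slope $0$. Hence beyond the outermost point of $L_1\cup L_2$ on any ray, $h|_C$ is affine with no further kinks, and an affine eventually-constant function is constant, so $h|_C$ has slope zero there. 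Because $\Sigma$ is connected, contains the skeleton, and contains all points of $L_1\cdot C$ and $L_2\cdot C$, its intersection with each ray of $C$ is a segment reaching at least as far as the outermost intersection point on that ray; beyond that point $h|_C$ has slope zero, so the one-sided outgoing slope of $h|_\Sigma$ at each boundary point of $\Sigma$ vanishes and no spurious chip arises at $\partial\Sigma$. Combining the three computations yields $\text{div}_\Sigma(h|_\Sigma)=(L_1\cdot C)-(L_2\cdot C)$ on $\Sigma$. I expect the main obstacle to be precisely this localization: verifying that the stated hypotheses on $\Sigma$ are exactly strong enough to guarantee that the divisor of $h|_\Sigma$ neither loses chips to infinity nor gains them at $\partial\Sigma$.
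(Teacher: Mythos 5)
Your proposal is correct and follows essentially the same route as the paper's proof: both restrict the difference of the two lines' defining piecewise-linear functions to $\Sigma$, identify its divisor with $(L_1\cdot C)-(L_2\cdot C)$ via the Poincar\'e--Lelong/stable-intersection identity (the paper cites Allermann--Rau for this), and check that the hypotheses on $\Sigma$ kill any boundary contribution. The only cosmetic difference is in the boundary bookkeeping --- the paper records the slope of each $\phi_i$ separately (slope $1$ on $(1,1)$-direction ends, $0$ on the others) and lets the corrections cancel in the difference, whereas you show directly that $h=\ell_1-\ell_2$ is eventually constant on every ray; these amount to the same computation.
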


\begin{proof}
For $i=1,2$, let $f_i$ be the pullback to $C$ of the defining piecewise-linear equation of the tropical line $L_i$, and let $\phi_i$ be the rational function on $\Sigma$ obtained by restricting $f_i$ to $\Sigma$. 

By \cite{AR}, the Weil divisor $\text{div}(f_i)$ on $C$ is exactly the stable intersection of $L_i$ and $C$ counted with multiplicity. For a point $x$ in the interior of $\Sigma$, we see from the definitions that  $\text{div}(\phi_i)(x)$ coincides with $\text{div}(f_i)(x)$.
For $x$ on the boundary of $\Sigma$, the difference between $\text{div}(f_i)(x)$ and $\text{div}(\phi_i)(x)$ equals the outgoing slope of $f_i$ on the infinite edge of $C$ emanating from $x$. If $x$ is on an edge pointing in the $(1,1)$ direction, then since we are assuming that the intersection of $L_i$ and $C$ is contained in $\Sigma$, it follows that $x$ is not in the region spanned by the two rays of $L_i$ pointing in the $(-1,0)$ and $(0,-1)$ directions. Therefore, the slope of $f_i$ on the infinite edge emanating from $x$ is $1$. Similarly, if $x$ is on an infinite edge pointing in a different direction, the slope of $f_i$ is zero on that edge.

Therefore,  $\text{div}(\phi_i)$ is the divisor obtained from $\text{div}(f_i)(x)$ by subtracting a chip from every boundary point of $\Sigma$ on an infinite edge pointing in the $(1,1)$ direction. 
It follows that $\phi_1 - \phi_2$ is a rational function on $\Sigma$ whose associated principal divisor is exactly $C\cdot L_1 - C\cdot L_2$, as required.

\end{proof}

\end{subsection}

\end{section}

\begin{section}{The 7 bitangent lines to a smooth tropical plane quartic}
A classical fact in algebraic geometry is that a smooth plane quartic curve has $28$ bitangent lines. In this section, we will adapt a classical proof of that fact to the tropical world, and show that every smooth plane tropical curve admits $7$ bitangent lines up to a certain equivalence relation.    

\begin{definition} \label{def:bitangent}
A tropical line $L$ is said to be \emph{bitangent} to a smooth tropical plane quartic curve $C$ if $L\cap C$ consists of two components each with stable intersection multiplicity $2$, or one component with stable intersection multiplicity $4$.  A tropical bitangent line is called \emph{skeletal} if its intersection with the tropical curve is contained in the skeleton. 
\end{definition}

An outline of a classical proof that there are 28 bitangent lines is as follows. Every smooth algebraic plane quartic curve $X$ is canonically embedded, so any line section is a canonical divisor. It follows that bitangent lines correspond bijectively to linear equivalence classes\footnote{Since $X$ is not hyperelliptic,
one can in fact remove the phrase ``linear equivalence classes'' here.} of effective divisors $D$ of degree two such that $2D$ is a canonical divisor. Such divisors are called \textit{effective theta characteristics}.  The set of all theta characteristics (effective or not) is canonically a torsor for ${\rm Jac}(X)[2]$, which has order $64$. 
Via a non-trivial analysis of bilinear forms in characteristic $2$ as in \cite[Ch. VIII.2]{DO}, one proves that there are exactly 28 effective theta characteristics, obtaining the desired result.  


\medskip

One can define theta characteristics for a metric graph in the same way:

\begin{definition} 
A \emph {theta characteristic} on a metric graph $\Gamma$ is a linear equivalence class of divisors $D$ such that $2D$ is linearly equivalent to the
canonical divisor on $\Gamma$.
A theta characteristic is called {\em effective} if one can choose $D$ to be an effective divisor.
\end{definition}

Zharkov proves in \cite{Zh} that a metric graph $\Gamma$ of genus $g$ has $2^g$ theta characteristics, exactly $2^g - 1$ of which are effective.
The fact that there are $2^g$ theta characteristics (instead of $2^{2g}$ as in the classical case) comes from the fact that ${\rm Jac}(\Gamma)$ is a {\bf real} (rather than complex) torus of dimension $g$.  In both the classical and tropical situations the set of theta characteristics is naturally a torsor for the 2-torsion in the Jacobian.  Zharkov's proof that all but one of the theta characteristics on $\Gamma$ is effective does not seem to have an algebraic analogue; for
the reader's convenience we summarize some of the ideas behind Zharkov's proof in Lemma~\ref{thetaAlgorithm} below.

\medskip

In particular, Zharkov's theorem tells us that an abstract metric graph $\Gamma$ of genus 3 has exactly $7$ effective theta characteristics.  However, unlike the classical case, it is not obvious (and in fact not true) that bitangent lines to a smooth tropical plane quartic $\Gamma$
are in bijection with effective theta characteristics.  In fact, it is not even completely obvious {\em a priori} how to define a tropical bitangent line!  
With our definition (which seems to be the only reasonable one), there are examples of smooth tropical plane quartics with infinitely many bitangent lines;
see Example~\ref{genus3example}. (This is reminiscent of Vigeland's
example of cubic surfaces with infinitely many lines rather than 27 \cite{Vi}; a possible relationship between tropical plane quartic curves and tropical cubic surfaces warrants further investigation.) Moreover, unlike the classical case, it appears to be subtle to prove that for every effective theta characteristic $[D]$, there is a tropical line $L$ and a divisor $D' \sim D$ such that $L\cdot C = 2D'$.
We will prove this via a case-by-case analysis of the different possible smooth tropical plane quartic curves.

\medskip

First, however, we show that while tropical plane quartics are not canonically embedded, it is still true that every line section belongs to the class of the canonical divisor.  We thank Yang An for suggesting the following proof:

\begin{lemma} \label{lineSectionsAreCanonicals}  
For a smooth tropical plane quartic curve $C$ and a tropical line $L$, the stable intersection divisor $L\cdot C$ is canonical.
\end{lemma}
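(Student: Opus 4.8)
The plan is to reduce the statement to the verification of a single, conveniently chosen line section, and then to identify that section with $K_\Gamma$. First I would invoke Lemma~\ref{lemma:folklore}: given any two tropical lines $L_1, L_2$, one can always find a connected subgraph $\Sigma$ of $C$ containing the skeleton $\Gamma$ together with both stable intersections $L_1\cdot C$ and $L_2\cdot C$, and the lemma then gives $L_1\cdot C \sim L_2\cdot C$ on $\Sigma$. Since $\Sigma$ is obtained from $\Gamma$ by attaching trees, it deformation retracts onto $\Gamma$ without changing the genus, so the retraction $\tau\colon \Sigma \to \Gamma$ induces an isomorphism $\mathrm{Pic}(\Sigma)\xrightarrow{\sim}\mathrm{Pic}(\Gamma)$ compatible with degrees; under it the classes $[L_1\cdot C]$ and $[L_2\cdot C]$ coincide. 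Thus the class $\kappa := [\tau_*(L\cdot C)] \in \mathrm{Pic}(\Gamma)$ is independent of $L$, and the stable intersection of a line with a quartic has degree $1\cdot 4 = 4 = 2g-2 = \deg K_\Gamma$. It therefore suffices to prove $\kappa = [K_\Gamma]$ for one well-chosen $L$.

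For the identification I would exploit the adjunction picture visible in the Newton polygon. The curve $C$ is dual to a unimodular triangulation of $\Delta_4 = \mathrm{conv}\{(0,0),(4,0),(0,4)\}$, whose three interior lattice points $(1,1),(2,1),(1,2)$ are precisely the translate $(1,1)+\Delta_1$ of the standard simplex $\Delta_1 = \mathrm{conv}\{(0,0),(1,0),(0,1)\}$ dual to a tropical line. This is the combinatorial shadow of the classical identity $K_C = (K_{\mathbb P^2}+C)|_C = \mathcal O(1)|_C$. Concretely, I would take the divisor $\mathrm{div}(\phi)$ of the pullback $\phi$ of a tropical linear form, retract it to $\Gamma$, and compare with $K_\Gamma = \sum_{v}(\mathrm{valence}(v)-2)\, v$; since $\Gamma$ is trivalent with exactly four vertices, $K_\Gamma$ simply places one chip at each vertex, so the goal is to show that some retracted line section lands in this class.

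The hard part will be this last identification. The cleanest route is to establish (or cite) the tropical adjunction statement that, for a smooth tropical plane curve with Newton polygon $P$, the divisor cut out by a tropical section whose Newton polygon is $\mathrm{conv}(\mathrm{int}(P)\cap \mathbb Z^2)$ is canonical; for $P=\Delta_4$ this interior polygon is a translate of $\Delta_1$, so a line section is already canonical and we conclude by the first paragraph. If one prefers to avoid invoking a general adjunction theorem, the alternative is a direct computation: position the vertex of $L$ so that all four stable-intersection points lie on unbounded ends of $C$, where the retraction $\tau$ is transparent, and then check that the four retracted chips are linearly equivalent to $K_\Gamma$. I expect this direct verification to be the main obstacle, since the bookkeeping of which ends are met and how the retracted chips sit on $\Gamma$ may ultimately require separating into the honeycomb, Mickey Mouse, one-bridge, and two-bridge cases of Proposition~\ref{types}.
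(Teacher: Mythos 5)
Your opening reduction is sound and matches the paper's first move: by Lemma~\ref{lemma:folklore} all line sections of $C$ are linearly equivalent, the retraction onto the skeleton identifies Picard groups, and tropical B\'ezout gives degree $4 = 2g-2$. The genuine gap is the step you yourself flag as ``the hard part,'' because neither of your two routes for identifying the class with $[K_\Gamma]$ is actually available. Route (a) is circular: the ``tropical adjunction'' statement you propose to cite --- that a section whose Newton polygon is the interior hull of $P$ cuts a canonical divisor --- \emph{is}, for the quartic triangle, exactly the lemma being proved; there is no independent such theorem to invoke, and the paper's introduction explicitly warns that smooth tropical quartics are \emph{not} canonically embedded in the naive sense (the standard representative of $K_\Gamma$, one chip per vertex, generally lies on no tropical line), which is precisely why this lemma requires a real argument rather than a transfer of classical adjunction. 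Route (b) is left unexecuted, and your proposed case division (the four skeleton types of Proposition~\ref{types}) underestimates it: where the four far-out intersection points retract onto the skeleton is governed not by the homeomorphism type of the skeleton but by the attachment pattern of the unbounded ends, i.e.\ by the triangulation itself, of which there are 7422 (in 1277 symmetry classes). So the bookkeeping is a vastly larger case analysis than four cases, and no mechanism for carrying it out is given.

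The idea you are missing --- and which the paper uses to avoid all of this --- is that one never needs to identify the class of a line section with $[K_\Gamma]$ directly. By tropical Riemann--Roch, a degree-$4$ divisor $D$ on a genus-$3$ metric graph is canonical if and only if $r(D) = 2$, so it suffices to bound the rank below; and rank can be certified pointwise using Luo's rank-determining sets \cite{Luo}. Concretely: fix a loopless model $G$, and for each effective degree-$2$ divisor $E$ supported on vertices show $D - E$ is equivalent to an effective divisor. If $E = u+v$ with $u \neq v$, choose a tropical line $L'$ through $u$ and $v$; vertices of $C$ always lie in the stable intersection $L'\cdot C$, and Lemma~\ref{lemma:folklore} gives $D \sim L'\cdot C \geq E$. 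If $E = 2w$, place the vertex of a line at $w$ and a short chip-firing argument produces two chips at $w$. The crucial advantage is that this exploits the freedom to choose a \emph{different} line for each $E$ --- exactly the flexibility your plan forgoes by fixing one ``well-chosen'' $L$ --- and it converts the problem from computing a triangulation-dependent divisor class into the easy existence of tropical lines through prescribed pairs of points.
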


 \begin{proof}
By Lemma~\ref{lemma:folklore}, any two divisors obtained as line sections of $C$ are linearly equivalent.   
Now let $L$ be a tropical line, and let $D$ be the divisor $L\cdot C$. By tropical Bezout's Theorem \cite{FirstSteps}, the degree of $D$ is $4$, and by the tropical Rieman-Roch theorem it is canonical if and only if its rank is $2$. 
Let $\Gamma$ be a compact subset of $C$ containing the support of $D$, considered as a metric graph.
Choose a loopless model $G$ for $\Gamma$, which is obtained from $\Gamma$ by adding two-valent vertices in the middle of edges so that $G$ contains no loops.  Let $E$ be any effective divisor of degree $2$ supported on the vertices of $G$. 

\medskip

If $E$ consists of two distinct vertices $u$ and $v$, let $L'$ be a tropical line passing through them. Since $u$ and $v$ are vertices, they are contained in the stable intersection $L'\cdot C$ (even when the intersection is not transversal). By the argument above, $L\cdot C$ is linearly equivalent to $L'\cdot C$, hence $D$ is equivalent to a divisor containing $E$. 

\medskip

Otherwise, $E$ consists of a double point at some vertex $w$ of $G$. Let $L$ be a tropical line with a vertex at $w$. Suppose first that, locally at $w$, $\Gamma$ is parallel to $L$, i.e. $\Gamma$ is trivalent at $w$ with edges pointing towards  $(-1,0), (0,-1), (1,1)$.  Since $C$ is not a line, at least one of these edges is finite and the stable intersection contains a point at $w$ and  at the vertices at the end of those edges. By firing the complement of a neighborhood of $w$, we obtain a divisor containing at least two chips at $w$.
If, on the other hand, $L$ is not parallel to $\Gamma$ at $w$, the stable intersection is easily seen to have multiplicity two at $w$.

\medskip
 
Since the vertices of $G$ are a rank-determining set by \cite{Luo}, the rank of $D$ is at least $2$, and therefore $D$ is canonical. 
 \end{proof}

Next, we describe Zharkov's algorithm for finding the theta characteristics of a metric graph $\Gamma$ (see \ref{genus3example} for an example).
Place a nonzero $(\mathbb{Z}/2\mathbb{Z})$-flow on the graph, namely, choose a subset of the edges so that around each vertex, the number of selected edges equals 0 modulo 2. Let $S$ be the support of the flow.  Place arrows on $\Gamma-S$ indicating movement away from $S$, with multiple arrows allowed on different portions of the same edge. Now, at the points where these arrows meet, place a number of chips equal to $\#\{\text{incoming edges}\}-1$.

\medskip

Zharkov proves:
 
\begin{lemma} \label{thetaAlgorithm}  
Let $\Gamma$ be a metric graph of genus $g$.  Then:
\begin{enumerate}
\item Each of the $2^g - 1$ divisors constructed as above is an effective theta characteristic.  
\item Different choices of $(\mathbb{Z}/2\mathbb{Z})$-flows yield non-linearly equivalent divisors. 
\item The remaining theta characteristic on $\Gamma$ is non-effective.
\end{enumerate}
\end{lemma}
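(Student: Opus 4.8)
The plan is to handle the three parts in sequence, using a single explicit rational function to settle part~(1) and then exploiting the torsor structure on theta characteristics to settle parts~(2) and~(3). Throughout I fix a loopless model $G$ of $\Gamma$; for the genus-$3$ applications of this paper every skeleton is trivalent, so I will carry out the computations in the trivalent case and comment on the general reduction at the end. Effectiveness in part~(1) is immediate from the algorithm: chips are placed only at meeting points of the outward arrows, and a meeting point with $k$ incoming arrows (necessarily $k\geq 2$) carries $k-1\geq 1$ chips, so every coefficient of $D$ is nonnegative. To prove $2D\sim K_\Gamma$ I would introduce $f=-d_S$, where $d_S(x)$ is the lattice distance from $x$ to the flow support $S$; thus $f\equiv 0$ on $S$, the outward arrows are exactly the directions of increasing $d_S$, and the meeting points are the local maxima (the cut locus) of $d_S$. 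A vertex-by-vertex computation of $\text{div}(f)$ then yields $K_\Gamma+\text{div}(f)=2D$: at each trivalent flow vertex and each trivalent pass-through vertex the canonical coefficient $+1$ cancels the slope contribution $-1$, while at a meeting point of $k$ arrows the canonical coefficient $k-2$ combines with the slope contribution $+k$ to give $2(k-1)$. This identity exhibits $[D]$ as an effective theta characteristic.

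For part~(2) I would use that the theta characteristics form a torsor under $J(\Gamma)[2]$, together with the canonical identification $J(\Gamma)[2]\cong H_1(\Gamma,\mathbb{Z}/2\mathbb{Z})$ coming from the description of the Jacobian as the real $g$-torus $H_1(\Gamma,\mathbb{R})/H_1(\Gamma,\mathbb{Z})$. Given two flows $S_1,S_2$, part~(1) gives $\text{div}(d_{S_i})=K_\Gamma-2D_{S_i}$, hence $2(D_{S_1}-D_{S_2})=\text{div}(d_{S_2}-d_{S_1})$, so $[D_{S_1}-D_{S_2}]$ is $2$-torsion. This class vanishes precisely when $\tfrac12(d_{S_2}-d_{S_1})$ can be realized as an integer-slope rational function with the same divisor, i.e. precisely when $d_{S_2}-d_{S_1}$ has even slope on every edge. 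But $d_{S_i}$ has slope $0$ on $S_i$ and slope $\pm1$ off $S_i$, so $d_{S_2}-d_{S_1}$ has odd slope exactly on the edges lying in the symmetric difference $S_1\triangle S_2$. Thus the $2$-torsion obstruction is detected by the mod-$2$ slope data of $d_{S_2}-d_{S_1}$, which is the class of $S_1\triangle S_2$ in $H_1(\Gamma,\mathbb{Z}/2\mathbb{Z})$; this is nonzero whenever $S_1\neq S_2$, giving injectivity. I expect the main obstacle here to be making this last identification fully rigorous, namely matching the two descriptions of $J(\Gamma)[2]$ so that $[D_{S_1}-D_{S_2}]$ corresponds to $[S_1]+[S_2]$ under the canonical isomorphism.

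For part~(3) I would argue by counting. Parts~(1) and~(2) produce $2^g-1$ distinct effective theta characteristics from the $2^g-1$ nonzero flows; since $J(\Gamma)[2]$ has order $2^g$ and the theta characteristics are a torsor under it, exactly one class remains, namely the one corresponding (under the dictionary above) to the zero flow, i.e. the base point of the torsor. To show this remaining class is non-effective I would fix a base vertex $q$, compute its $q$-reduced representative via Dhar's burning algorithm, and check that the coefficient at $q$ is negative, so that the class has rank $-1$ and admits no effective representative; alternatively one can obtain this class as a limit of the construction as $S$ degenerates, following Zharkov's original argument. I expect this to be the subtlest point, since—unlike parts~(1) and~(2)—there is no uniform rational function producing the remaining class, so identifying it cleanly and certifying non-effectiveness requires the reduced-divisor machinery.

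Finally, the passage from trivalent to general metric graphs: effectiveness and the theta-characteristic property depend only on the linear equivalence class and hence are independent of the model, and at a trivalent vertex the flow-degree is automatically $0$ or $2$, which is exactly what makes the local computation in part~(1) balance. For our applications no higher-valence vertices occur, so this suffices; the fully general statement is Zharkov's, and I would reduce to the trivalent case or cite \cite{Zh} directly rather than reproduce it here.
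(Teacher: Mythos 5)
First, a point of comparison: the paper does not prove this lemma at all --- it is stated as Zharkov's result and the reader is referred to \cite{Zh} --- so your attempt has to be measured against what a complete proof requires, not against an argument in the paper. Your parts (1) and (2) meet that bar in the trivalent case, which is all the paper's applications need. The identity $K_\Gamma + \operatorname{div}(-d_S) = 2D_S$ does hold pointwise: writing $n$ for the valence of a point and $k$ for its number of incoming arrows, both sides equal $2k-n$ at points off $S$, equal $-(n-2)$ at points of $S$ whose flow-valence is $2$, and vanish elsewhere. (Your bookkeeping at meeting points silently assumes all $n$ adjacent edges carry incoming arrows; a trivalent vertex with two incoming and one outgoing arrow is still a meeting point carrying one chip, but the uniform computation just given covers it, so this is only an imprecision.) Note also that your trivalent restriction is doing genuine work: at a vertex of flow-valence $k\geq 4$ the recipe as stated in the paper fails --- for the graph with two vertices joined by four parallel edges and $S$ the whole graph, the recipe gives $D=0$, which is not a theta characteristic --- and the fix is to add $(k-2)/2$ chips at each such vertex. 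Finally, your part (2) is already complete as written, and the obstacle you anticipate is not there: if $D_{S_1}\sim D_{S_2}$, then $d_{S_2}-d_{S_1}=2h+c$ for some rational function $h$ and constant $c$, hence has even slope everywhere, contradicting its odd slope on the nonempty symmetric difference $S_1\triangle S_2$. No identification of $\operatorname{Jac}(\Gamma)[2]$ with $H_1(\Gamma,\mathbb{Z}/2\mathbb{Z})$ is needed for injectivity.

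The genuine gap is part (3). Your counting argument correctly shows that exactly one class $[\nu]$ remains, but nothing you wrote rules out the possibility that $[\nu]$ is also effective: the count only says that $2^g-1$ of the $2^g$ classes have been exhibited as effective, and (unlike in the classical complex setting, where effectiveness counts are genuinely subtle) there is no formal reason the last one cannot be effective too. Your proposed verification --- fix $q$, compute the $q$-reduced representative of $[\nu]$, and check that its coefficient at $q$ is negative --- cannot be carried out, because your construction never produces any concrete divisor representing $[\nu]$ (the distance-function recipe requires $S\neq\emptyset$), and even with a representative in hand, ``check'' is not an argument that works uniformly over all metric graphs; that check \emph{is} the content of the theorem. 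A self-contained route would be to prove the converse of part (1): every effective $D_0$ with $2D_0\sim K_\Gamma$ is equivalent to some $D_S$ --- for instance by extracting a candidate flow from the even-slope locus of a rational function $\phi$ with $\operatorname{div}(\phi)=K_\Gamma-2D_0$, using that at every point the number of directions in which $\phi$ has even slope is even --- and then to invoke your part (2) to conclude that the $[D_S]$ exhaust the effective classes; but this requires real additional work. Absent that, part (3) rests on citing \cite{Zh}, which is legitimate (it is exactly what the paper does for the entire lemma), but your writeup should present it as a citation rather than as a proof.
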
 


We refer the reader to \cite{Zh} for an explicit description of the non-effective theta characteristic, which will not be needed for this paper.

 \begin{example}\label{genus3example}  Consider the genus $3$ metric graph $\Sigma$ in Figure \ref{currentexample}.  (We have not included lengths, since it will not affect the combinatorics of this example.)  Place a nonzero $(\mathbb{Z}/2\mathbb{Z})$-flow on it, and let $P+Q$ be the corresponding effective theta characteristic for $\Sigma$.

We then consider a tropical curve $C$ which has $\Sigma$ as its minimal skeleton.  Drawing a line through $P$ and $Q$ does indeed give a bitangent to $C$, as shown in Figure \ref{currentexample} (showing the tropical curve without its exterior branches).  One component of the intersection, containing $P$, is a line segment with stable tropical intersection number $2$.  The other component, the point $Q$, is a tropical intersection of multiplicity $2$.  Hence $L\cdot C=2P+2Q$.
  
Note that the bitangent line in Figure \ref{currentexample} can be translated horizontally while remaining bitangent, leaving $P$ alone but moving $Q$.  This gives an infinite family of bitangents whose intersection divisors with $C$ are linearly equivalent.  This behavior can be seen already in the metric graph $\Sigma$:  there are effective divisors linearly equivalent to $P+Q$, namely those leaving $P$ fixed and moving $Q$ along the edge containing it.
 \end{example}
 
\begin{figure}[hbt]
\begin{center}
\includegraphics[
height=2in
]
{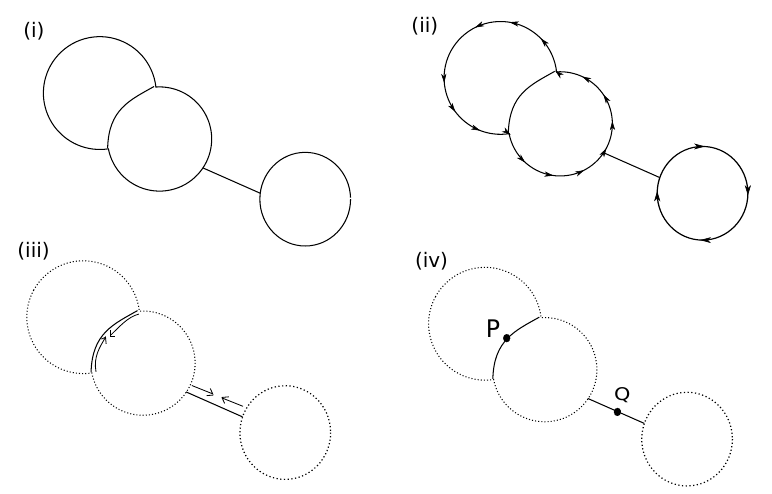}
\includegraphics[
height=1.7in
]
{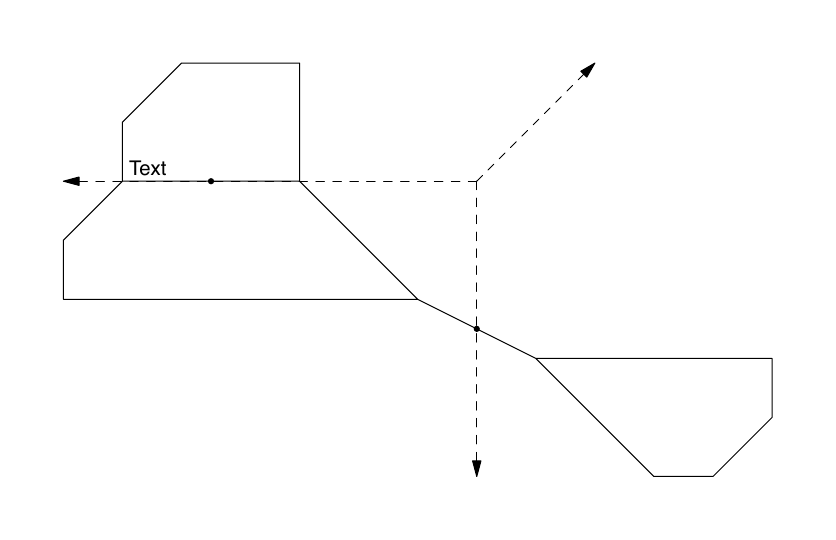}
\end{center}
\caption{The construction of a theta characteristic, and a corresponding bitangent line.}
\label{currentexample}
\end{figure}

In Appendix \ref{section:theta_chars}, we use Zharkov's algorithm to compute the $7$ effective theta characteristics for each of the first four graphs in Figure \ref{genus_three_graphs}.  The computation depends slightly on the relative lengths of certain edges, but is made easier by the fact that smooth plane quartic curves are not hyperelliptic (Theorem~\ref{hyperelliptic_theorem}).

\medskip

Next, we show that each effective theta characteristic on a smooth tropical plane quartic curve is represented by a bitangent line:
 \begin{proposition}\label{keyproposition}  Let $P+Q$ be an effective theta characteristic for a smooth tropical plane quartic curve $C$, and let $L$ be a tropical line connecting $P$ and $Q$.  Then $L$ is a bitangent  line to $C$.
 \end{proposition}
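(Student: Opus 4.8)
The plan is to combine the global divisor-theoretic input with a local analysis of the stable intersection near $P$ and $Q$. First I would record what comes for free: by Lemma~\ref{lineSectionsAreCanonicals} the stable intersection $L\cdot C$ is a canonical divisor, so it is effective of degree $4$ and satisfies $L\cdot C \sim K_C$; on the other hand, since $P+Q$ is a theta characteristic we have $2(P+Q)\sim K_C$ by definition, and therefore $L\cdot C \sim 2P+2Q$. Moreover, because $L$ is chosen to pass through $P$ and through $Q$, both points lie in the support of the stable intersection (a point of set-theoretic intersection always contributes positively to the stable intersection), so I may write $L\cdot C = P + Q + R$ with $R$ effective of degree $2$ and $R\sim P+Q$.

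The content of the proposition is that the intersection actually organizes into the pattern of Definition~\ref{def:bitangent}: either two connected components of $L\cap C$ each carrying stable multiplicity $2$ (when $P\neq Q$), or a single component of multiplicity $4$ (when $P=Q$). I would prove this by examining the local picture of $L$ at $P$ and at $Q$ and computing the local stable intersection multiplicity in each case. Since $C$ is smooth---trivalent, with all edge weights $1$ and primitive integral edge directions---and $L$ is a standard tropical line with ray directions $(-1,0),(0,-1),(1,1)$, the possibilities are limited: at $P$ either (i) a ray of $L$ crosses an edge of $C$ transversally, contributing $|\det(u,w)|$ for the relevant primitive directions $u,w$; or (ii) a ray of $L$ overlaps an edge of $C$ along a segment, in which case displacing $L$ resolves the overlap so that the whole segment is a single component whose multiplicity is read off from the edges of $C$ at its two ends; or (iii) the vertex of $L$ sits at $P$. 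In each configuration I would compute the local multiplicity directly from the balancing condition and standard tropical intersection theory, and verify that the component through $P$ (and likewise through $Q$) carries multiplicity exactly $2$.

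The main obstacle is precisely this local bookkeeping: a priori a tropical line through two points of a quartic could meet it transversally in four simple points, which would give an ordinary line section rather than a bitangent, so the crux is to show that routing $L$ through the theta-characteristic points $P$ and $Q$ forces the local multiplicity at each of the two components to be even, hence at least $2$. The theta characteristic property enters both globally, through the equivalence $L\cdot C\sim 2P+2Q$, and in pinning down the positions of $P$ and $Q$ that make the line tangent. Once the bound is in hand the global count closes the argument immediately: the components of $L\cap C$ through $P$ and through $Q$ each have multiplicity at least $2$, their total is at most $\deg(L\cdot C)=4$, so each is exactly $2$ and there is no residual intersection $R$ away from these two components. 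This simultaneously rules out a spurious third component of $L\cap C$ and shows $L\cdot C = 2P+2Q$ (respectively $4P$ when $P=Q$), yielding the bitangent configuration and completing the proof.
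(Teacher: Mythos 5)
There is a genuine gap at the crux of your argument. Your global setup is fine (and matches the paper's): $L\cdot C\sim K_C$ by Lemma~\ref{lineSectionsAreCanonicals}, $2(P+Q)\sim K_C$ by hypothesis, so writing $L\cdot C = P+Q+R+S$ gives $R+S\sim P+Q$. But your plan to finish by ``local bookkeeping'' --- checking configurations at $P$ and $Q$ and verifying that each component carries multiplicity exactly $2$ --- cannot work, because tangency at $P$ is not a local consequence of the line passing through $P$. For a \emph{generic} pair of points $P,Q$ on the curve (not a theta characteristic), a line through them meets $C$ transversally in four simple points, and the local picture at $P$ in that situation is identical to the local picture at a theta-characteristic point. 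Being a theta characteristic is a global condition, so no computation with balancing and local determinants can detect it; you acknowledge this obstacle yourself, but then merely assert that the theta-characteristic property ``pins down the positions of $P$ and $Q$ that make the line tangent,'' which is precisely the statement to be proved, with no mechanism supplied.

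What is missing is the paper's key step: converting the linear equivalence $R+S\sim P+Q$ into the equality $R+S = P+Q$, which requires understanding how effective divisors can move within their equivalence class. The paper does this by a trichotomy. If $P+Q$ is rigid, the equality is immediate and purely divisor-theoretic --- no local analysis at all. If $P$ or $Q$ is flexible (movable independently), then by \cite[Corollary 4.7]{BN} such a point lies on a bridge edge; the classification of theta characteristics in Appendix~\ref{section:theta_chars} shows this happens only for the one-bridge and two-bridge types, where bridges have slope $-2$ or $-\tfrac12$, and only there does a genuinely local slope argument yield multiplicity $2$. If $P+Q$ moves only in tandem, Lemma~\ref{twoChips} shows $\{P,Q\}$ is a $2$-edge-cut, which by the appendix occurs only for the Mickey Mouse and two-bridge types, and the paper then produces an explicit bitangent (a line with vertex on $e_4\cup e_5\cup e_6$, through the midpoint of the edge dual to the segment from $(1,1)$ to $(2,2)$) and identifies it with that theta characteristic. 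None of this case structure, nor the supporting inputs (Lemma~\ref{twoChips}, the bridge criterion, the appendix classification), appears in your proposal. A smaller but real error: your claim that $P$ and $Q$ automatically lie in the support of the stable intersection is false when $L$ overlaps an edge of $C$ along a segment containing $P$ --- the stable intersection can then be concentrated at the ends of that segment --- which is why the paper instead moves $L\cdot C$ within $L\cap C$ to a linearly equivalent divisor containing $P+Q$.
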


\begin{proof}
First, if the stable tropical intersection $L\cdot C$ does not contain $P$ or $Q$, we can move the divisor $L\cdot C$ within $L\cap C$ to a linearly equivalent  divisor $P+Q+R+S$, with all four points still in $L\cap C$.
We wish to show that $P+Q+R+S=2P+2Q$.

\medskip

By definition of a theta divisor we know that $2P+2Q$ is canonical, and by Lemma \ref{lineSectionsAreCanonicals}, $L\cdot C$ is canonical for any line $L$.  Thus $2P+2Q\sim P+Q+R+S$, and so $P+Q\sim R+S$.  There are three cases we have to deal with.

\begin{itemize}

\item[(i)] $P+Q$ is {\bf rigid} (i.e., not linearly equivalent to any other effective divisor). 
In this case, it follows immediately that $P+Q=R+S$.
 
 \smallskip
 
 \item[(ii)]  One or both of $P$ and $Q$ is {\bf flexible}, i.e., can be moved independently of the other while maintaining linear equivalence.
    In this case, we claim that it suffices to show that the intersection multiplicity of $L$ with $C$ at each flexible point is 2.  Indeed, if the multiplicity at $P$ is 2, and $Q$ cannot move independently, then $L\cdot C = 2P + Q + S \sim K$ together with $2P + 2Q\sim K $ implies that $Q\sim S$. Since we assumed that $Q$ cannot move independently, we obtain $Q=S$, namely $L\cdot C = 2P + 2Q$. 
    
 \smallskip
    
    \noindent To see that the multiplicity at each flexible point is 2,  we notice that, by \cite[Corollary 4.7]{BN}, a chip can only move independently when it is on a bridge edge.  This can only occur for the one-bridge and the two-bridge curves (cf. Figure \ref{four_example_curves}). The slopes of the bridge edges in these curves are always either $-2$ or $-\frac{1}{2}$. By Appendix \ref{section:theta_chars}, the two-bridge curve has at most four theta characteristics with a chip on a bridge, and the the one-bridge curve has as most three such theta characteristics (depending on the edge lengths). Moreover, the chip on a bridge with slope $-2$ is always obtained by intersection with the edge of a tropical line pointing in the $(-1,0)$ direction, and the chip on an edge with slope $-\frac{1}{2}$ is obtained by intersection with an edge pointing in the $(0,-1)$ direction. Either way, the intersection is of multiplicity 2.
    
 \smallskip
 
    
  \item[(iii)]  $P+Q$ is linearly equivalent to effective divisors obtainable only by moving the two points in tandem.
By Lemma \ref{twoChips} below, this can only occur when $P$ and $Q$ form a 2-edge-cut.  
By the classification in Appendix \ref{section:theta_chars}, this happens only for the Mickey Mouse and two-bridge combinatorial types.
 
  \begin{figure}[hbt]
\begin{center}
\includegraphics[
height=1.4in
]{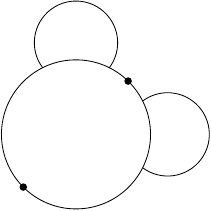}
\includegraphics[
height=1.4in
]{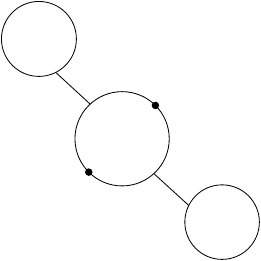}
\end{center}
\caption{The divisor $D_1$ on the Mickey Mouse graph, and the divisor $D_2$ on the two-bridge graph.}
\label{figure:mickey_bad}
\end{figure}

\smallskip

\noindent We begin with the Mickey Mouse graph. From the combinatorial type of $C$, we know that the corresponding Newton subdivision has exactly two edges between the interior points $(1,1)$, $(2,1)$, and $(1,2)$.  By symmetry, we may assume that $(1,1)$ is connected to the two other points, 
giving a Newton subdivision of the form shown on the left side of Figure \ref{figure:mickey_bad2}.  The diagonal edge from $(1,1)$ to $(2,2)$ is present because an edge must separate $(2,1)$ from $(1,2)$ from being joined by an edge.  The simple cycle $\gamma\subset C$ containing the support of the divisor $D_1$ is determined by which other points connect to $(1,1)$ in the triangulation, with all possibilities for the edges and a corresponding cycle shown in Figure \ref{figure:mickey_bad2}.  The relevant points are labelled by $i$ with $1\leq i\leq 9$, and the edges are labelled by $e_i$, where $e_i$ is dual to the edge connecting point $i$ to $(1,1)$.  Although not all triangulations will have all of the edges, each triangulation will have some subset of them making up the cycle $\gamma$.

 \begin{figure}[hbt]
\begin{center}
\includegraphics[
height=1.4in
]{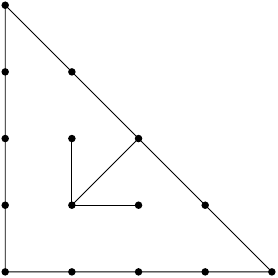}
\includegraphics[
height=1.6in
]{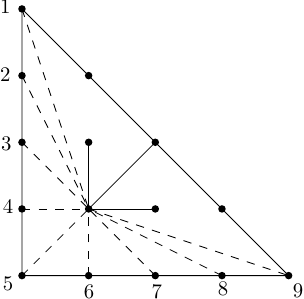}
\includegraphics[
height=1.6in
]{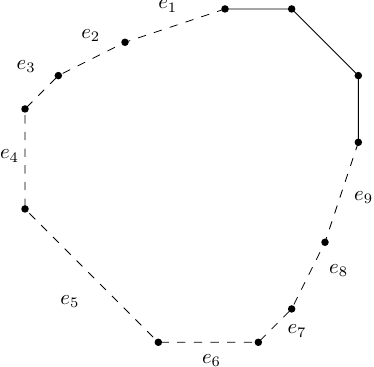}
\end{center}
\caption{Triangulations for the Mickey Mouse combinatorial type, and the middle cycle.}
\label{figure:mickey_bad2}
\end{figure}

\smallskip 
 
\noindent Let $L_E$ be a Euclidean line with slope $1$ passing through the midpoint of the edge of $C$ dual to the segment connecting $(1,1)$ and $(2,2)$.  The line $L_E$ must pass through exactly one more point of the cycle $\gamma$, and this point must in fact be contained in $e_4\cup e_5\cup e_6$; this is because the edges $e_1$, $e_2$, and $e_3$ have slopes $\geq 0$ and $\leq 1$, and the edges $e_7$, $e_8$, and $e_9$ have slopes $\geq 1$ , making it impossible for $L_E$ to pass through them due to the position of the edges.
 
\smallskip
 
\noindent We claim that the tropical line $L$ defined by placing the vertex at the point $p_E=L_E\cap (e_4\cup e_5\cup e_6)$ is bitangent to $C$.  It will certainly pass through the edge  of $C$ dual to the segment connecting $(1,1)$ and $(2,2)$ with multiplicity $2$.  We must show that the other component of $C\cap L$ also has multiplicity $2$.  We can handle this in several cases:
 \begin{enumerate}
 \item If  $p$ is the interior of $e_4$, then $C\cap L$ contains the vertical line segment below $p$ on $e_4$, which will have multiplicity $2$.  A similar argument holds if $p$ is in the interior of $e_6$.
  \item If $p$ is in the interior of $e_5$, then $p$ is an isolated intersection point of $C$ and $L$, and will have multiplicity $2$ due to slope considerations.
  \item If $p=e_4\cap e_5$, then that component of $C\cap L$ is a horizontal ray in the direction $(-1,0)$.  A small perturbation will put us in one of the previous two cases (or give two intersection points), meaning it is an intersection of multiplicity $2$.  A similar argument holds if $p=e_5\cap e_6$.
   \item If $p=e_4\cap e_6$ (meaning there is no $e_5$), a small perturbation will give either a vertical line segment contained in $e_4$, a horizontal line segment contained in $e_6$, a diagonal line segment dual to the edge connecting points $4$ and $6$, or a pair of points.  This too will be an intersection of multiplicity $2$.
 \end{enumerate}
Hence $L$ is indeed a bitangent line to $C$.  To verify that it corresponds to the theta characteristic $D_1$, note that it must correspond to \emph{some} theta characteristic, and $D_1$ is the only one with both points on the middle cycle.

\smallskip

\noindent The proof for the two-bridge case is nearly identical to the Mickey Mouse case. Due to the combinatorial type, we know there are no edges between the three interior points in the triangulation, and by symmetry we may assume the point $(1,1)$ corresponds to the middle cycle.  This means there is an edge connecting $(1,1)$ to $(2,2)$.  The other points that $(1,1)$ may connect to are the points $1,\ldots, 9$ as in the previous proof.  If we consider a Euclidean line through the midpoint of the edge corresponding to the edge connecting $(1,1)$ to $(2,2)$, we find once again that it must pass through a point in $e_4\cup e_5\cup e_6$.  Letting $L$ be the tropical line having a vertex at that point, we once again argue that it is bitangent to $C$, and find that it corresponds to the theta characteristic $D_2$.


 \end{itemize}

  
 


\end{proof}

\begin{lemma}\label{twoChips}
Let $P,Q$ be two distinct points of $\Gamma$ such that $P+Q$ is not rigid, and neither $P$ nor $Q$ is flexible (as defined in Proposition \ref{keyproposition}).
Then $\Gamma\setminus\{P,Q\}$ is disconnected.
\end{lemma}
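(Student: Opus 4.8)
The plan is to produce, from the non-rigidity hypothesis, a single rational function whose level sets exhibit $\{P,Q\}$ as the boundary of a region of $\Gamma$, and then to use non-flexibility to force that region to meet the rest of $\Gamma$ only at $P$ and $Q$. First I would use non-rigidity to choose an effective divisor $D' \sim P+Q$ with $D' \neq P+Q$, and a (necessarily non-constant) rational function $f$ on $\Gamma$ with $\text{div}(f) = D' - (P+Q)$. The key preliminary observation is that the support of $D'$ must be disjoint from $\{P,Q\}$: if, say, $P$ lay in the support of $D'$, then writing $D' = P + R$ gives $R \sim Q$ with $R \neq Q$, i.e.\ one chip of $P+Q$ moves while the other stays fixed, making $P+Q$ flexible and contradicting the hypothesis. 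Consequently $\text{div}(f)(P) = \text{div}(f)(Q) = -1$, and these are the only points at which $\text{div}(f)$ is negative; in the language of piecewise-linear functions, $P$ and $Q$ are the only poles of $f$, each simple.

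Next I would analyze the maximum set $B = \{x \in \Gamma : f(x) = \max f\}$. Since $f$ is non-constant, $B$ is a proper closed subgraph with non-empty topological boundary, and at every boundary point $f$ is flat into $B$ and strictly decreasing along at least one edge leaving $B$; hence every such boundary point is a pole of $f$. Therefore the boundary of $B$ is contained in $\{P,Q\}$, and the number of edges leaving $B$, counted across the whole boundary, equals the total pole order, namely $2$.

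The crux --- and the step I expect to be most delicate --- is to rule out the possibility that some connected component of $B$ meets $\Gamma \setminus B$ through a single edge. If a component $B_0$ of $B$ had exactly one boundary edge, at $P$ say, then that edge is a bridge separating $B_0$ from the rest of $\Gamma$, and firing $B_0$ by a small amount slides the single chip at $P$ along this bridge while leaving the chip at $Q$ (which lies outside $B_0$) untouched; this exhibits $P$ as flexible, contradicting the hypothesis. This is precisely the chip-firing content of \cite[Corollary 4.7]{BN}. Since the total number of boundary edges is $2$, excluding one-edge components rules out $B$ being disconnected and rules out the boundary being a single point, so $B$ is connected with its two boundary edges occurring at $P$ and at $Q$.

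Finally, because every path in $\Gamma$ from the interior of $B$ to $\Gamma \setminus B$ must pass through the boundary $\{P,Q\}$, removing $P$ and $Q$ separates $B \setminus \{P,Q\}$ from $\Gamma \setminus B$, both of which are non-empty; hence $\Gamma \setminus \{P,Q\}$ is disconnected, as desired. The main technical care needed throughout is the standard but fiddly piecewise-linear bookkeeping on metric graphs: that the negative part of $\text{div}(f)$ is carried exactly by the boundary exit edges of the maximum plateau, and that the firing move along a bridge moves one chip continuously without disturbing the other.
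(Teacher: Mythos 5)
Your proof is correct and follows essentially the same route as the paper's: produce a non-constant rational function $\phi$ from non-rigidity, observe that every boundary point of its maximum set is a pole and hence lies in $\{P,Q\}$, and conclude that removing $P$ and $Q$ separates the maximum set from its complement. If anything, your version is more complete than the paper's: where the paper simply asserts that the maximum is attained at both $P$ and $Q$, your step excluding components of $B$ with a single leaving edge (via the firing/flexibility argument) is precisely the point at which the non-flexibility hypothesis is needed to justify that assertion.
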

\begin{proof}
Let $\phi$ be a non-constant rational function such that $P+Q +\text{div}(\phi)$ is effective, and let $M$ be the global maximum of $\phi$. Then $\text{div}(\phi)<0$ on any boundary point of $\phi^{-1}(M)$ of valency greater than $1$. Therefore, the global maximum of $\phi$ is obtained at $P$ and $Q$, and any other point where the global maximum is obtained is an interior point of $\phi^{-1}(M)$. Moreover, since $\text{div}(\phi)$ is $-1$  at $P$ and $Q$, the slope of $\phi$ is $-1$ on a single edge leaving $P, Q$,  and $0$ on the others. 

Let $x$ be a point on an edge leaving $P$ where the slope is $0$, and $y$ a point on an edge leaving $Q$ where the slope is $-1$. Then $x$ is in $\phi^{-1}(M)$, and $y$ is not, so any path between $x$ and $y$ passes through the boundary of $M$, namely through either $P$ or $Q$. Therefore, by removing $P$ and $Q$, the graph becomes disconnected.
  
\end{proof}

Since we have seen (cf. Example~\ref{genus3example}) that a smooth tropical plane quartic curve can have infinitely many bitangent lines, we wish to define a notion of
equivalence for tropical bitangents so that if one bitangent can be moved to another while preserving the bitangency condition, then the two are equivalent. 
Formally, what works is the following:

\begin{definition} \label{def:equivalent_bitangents}
Two bitangent lines are \emph {equivalent} if they correspond to linearly equivalent theta characteristics.
\end{definition}

Note that any bitangent line is equivalent to a skeletal bitangent line. To see this, suppose that a tropical line intersects $C$ at the points $P$ and $Q$ with multiplicity $2$, and that either of them is on the infinite branches and not on the skeleton $\Sigma$ of the curve. Then, since the infinite branches are leaf edges, the theta divisor $P+Q$ is linearly equivalent to a divisor $P'+Q'$ on the skeleton. By Proposition \ref{keyproposition}, the line passing through $P'+Q'$ is a bitangent line.

\medskip

Combining this observation with  Lemma \ref{thetaAlgorithm} and Proposition \ref{keyproposition}, we obtain the following result:

\begin{theorem} 
Every smooth tropical plane quartic admits precisely 7 bitangent lines up to equivalence. 
\label{bitangent_theorem}
\end{theorem}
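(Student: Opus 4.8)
The plan is to combine the three ingredients already assembled in this section into a single counting statement. By Zharkov's theorem (as recorded in Lemma~\ref{thetaAlgorithm}), the skeleton $\Sigma$ of a smooth tropical plane quartic is a metric graph of genus $3$, so it has exactly $2^3-1 = 7$ effective theta characteristics, and these are pairwise non-linearly-equivalent. By Definition~\ref{def:equivalent_bitangents}, equivalence classes of bitangent lines are in bijection with the theta characteristics they realize, so it suffices to show that the map sending a bitangent line $L$ to the class of the theta characteristic $[\tfrac12(L\cdot C)]$ is a bijection onto the set of effective theta characteristics.

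First I would establish surjectivity. Given an effective theta characteristic, choose an effective representative $P+Q$ of degree $2$ on $\Sigma$ (possible by definition of effective). By Proposition~\ref{keyproposition}, the tropical line $L$ through $P$ and $Q$ is bitangent to $C$, and by Lemma~\ref{lineSectionsAreCanonicals} together with the argument in that proposition's proof, $L\cdot C = 2P'+2Q'$ for some $P'+Q' \sim P+Q$; hence $L$ realizes the given theta characteristic. This shows every one of the $7$ effective theta characteristics arises from some bitangent, so the map is surjective and there are at least $7$ equivalence classes.

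Next I would establish injectivity, which is essentially immediate from the definitions: two bitangent lines are declared equivalent precisely when the theta characteristics $[\tfrac12(L\cdot C)]$ they determine are linearly equivalent, so the induced map from equivalence classes to theta characteristics is injective by construction. The only genuine content needed here is that $\tfrac12(L\cdot C)$ is a \emph{well-defined} theta characteristic for every bitangent $L$: by Definition~\ref{def:bitangent} the divisor $L\cdot C$ has the form $2D$ with $D$ effective of degree $2$, and by Lemma~\ref{lineSectionsAreCanonicals} the divisor $L\cdot C$ is canonical, so $2D \sim K_\Sigma$ and $[D]$ is indeed an effective theta characteristic. Finally, I would invoke the observation immediately preceding the theorem statement, namely that every bitangent line is equivalent to a skeletal one, to confirm that working on the skeleton $\Sigma$ loses no equivalence classes; combined with surjectivity and injectivity this yields exactly $7$ classes.

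The main obstacle is not any single step in this assembly but the fact that all the real work has been front-loaded into Proposition~\ref{keyproposition} and Lemma~\ref{lineSectionsAreCanonicals}; the delicate case-by-case analysis of flexible chips, $2$-edge-cuts, and the four combinatorial types has already been carried out there. Given those inputs, the proof of the theorem is a short bijection argument. The one point demanding care is to verify that the correspondence genuinely factors through \emph{equivalence classes} rather than individual lines---that is, that moving a bitangent within a family (as in Example~\ref{genus3example}) does not change the associated theta characteristic class---but this is exactly what Definition~\ref{def:equivalent_bitangents} builds in, so no additional argument is required.
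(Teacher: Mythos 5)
Your proof is correct and follows essentially the same route as the paper, which obtains the theorem by combining Lemma~\ref{thetaAlgorithm} (the $2^3-1=7$ pairwise inequivalent effective theta characteristics), Proposition~\ref{keyproposition} (each is realized by a bitangent line), and the observation that every bitangent is equivalent to a skeletal one, with equivalence classes matching theta characteristics by Definition~\ref{def:equivalent_bitangents}. Your write-up merely makes the surjectivity/injectivity bookkeeping explicit, which the paper leaves implicit.
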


 \end{section}
 
 \begin{section}{Smooth tropical plane quartics are not hyperelliptic}
 
As is well known, a smooth algebraic plane quartic curve is never hyperelliptic. In this section, we prove a tropical analogue of this statement. This is a useful result in computing theta characteristics for genus $3$ graphs, as it put restrictions on possible edge lengths (cf. Appendix \ref{section:theta_chars}).  

\medskip

We begin by recalling the definition of a hyperelliptic graph.

\begin{definition}\label{hyperelliptic_definition}
A metric graph $\Gamma$ is \emph {hyperelliptic} if it has a divisor of degree $2$ and rank $1$.
\end{definition}

We will use the following necessary condition for hyperellipticity.

\begin{lemma}\label{twoBridge}
Let $\Gamma$ be a metric graph with a $2$-edge-cut. Then $\Gamma$ is hyperelliptic only if the two edges of the cut are of equal length.
\end{lemma}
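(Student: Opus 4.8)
The plan is to set up the two edges of the 2-edge-cut explicitly and derive a divisor of degree 2 and rank 1 only under the equal-length hypothesis, then argue by contradiction in the unequal case. Let the 2-edge-cut consist of edges $e_1$ and $e_2$, whose removal disconnects $\Gamma$ into two pieces $\Gamma_1$ and $\Gamma_2$. Denote by $a$ and $b$ the endpoints of $e_1$ and $e_2$ lying in $\Gamma_1$, and by $a'$ and $b'$ the corresponding endpoints in $\Gamma_2$, with $\ell_1 = \ell(e_1)$ and $\ell_2 = \ell(e_2)$. The natural candidate for a degree-$2$, rank-$1$ divisor is something like $a + b$ (or a point on each bridge edge), and the strategy is to understand when such a divisor can have rank $1$, which requires that $a+b-x$ be equivalent to an effective divisor for \emph{every} point $x \in \Gamma$.

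First I would recall that rank $1$ means: for every $x \in \Gamma$, the divisor $(a+b) - x$ is linearly equivalent to an effective divisor. The key mechanism for moving chips is chip-firing along the two edges $e_1$ and $e_2$ simultaneously. If one fires the set $\Gamma_1$ (all of it, as a region), a chip on $a$ and a chip on $b$ each flow across $e_1$ and $e_2$ toward $\Gamma_2$; crucially, because the cut has exactly two edges, the rational function realizing this firing has slope controlled by the two edge lengths, and the two chips arrive at $a'$ and $b'$ only if they traverse equal distances. I would make this precise by writing down the piecewise-linear function $\phi$ that is constant on $\Gamma_1$, linear with slope $1$ along each of $e_1, e_2$ (parametrized from the $\Gamma_1$ side), and constant on $\Gamma_2$: its divisor moves the two chips inward by equal \emph{lengths}, not equal fractions, so to move a chip all the way across $e_1$ one must move the $e_2$-chip by the same length, which stays inside $e_2$ precisely when $\ell_1 \le \ell_2$ and lands on $b'$ exactly when $\ell_1 = \ell_2$.

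The main obstacle — and the heart of the argument — is showing that when $\ell_1 \neq \ell_2$, rank $1$ fails. The clean way is to test the rank condition against a single well-chosen point $x$. Concretely, suppose $\ell_1 < \ell_2$ and place $x$ at the far endpoint $b'$ of $e_2$ (or somewhere in $\Gamma_2$); I would argue that any effective divisor equivalent to $a+b$ must, when restricted appropriately, keep ``too much mass'' trapped on the short side, because the only way chips cross the cut is through the rational functions just described, and their slopes along $e_1$ and $e_2$ are forced to be equal integers by the balancing/harmonicity of chip-firing. This should show that no effective representative of $(a+b)-x$ exists for that particular $x$, contradicting rank $1$. The delicate point to get right is the bookkeeping of how a general rational function behaves across a 2-edge-cut: by a cut argument, any rational function $\psi$ on $\Gamma$ has the property that the net number of chips of $\mathrm{div}(\psi)$ flowing from $\Gamma_1$ to $\Gamma_2$ equals the sum of the outgoing slopes of $\psi$ along $e_1$ and $e_2$, and a parametrization/integrality argument along the two parallel edges pins down exactly when a degree-$2$ divisor can be pushed fully across, isolating the equal-length condition as necessary.

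Finally I would assemble these pieces: assuming $\Gamma$ is hyperelliptic, it has some degree-$2$, rank-$1$ divisor $D$; I would reduce to the case that $D$ is supported appropriately relative to the cut (using that rank is a linear-equivalence invariant to move $D$ into a normal form straddling or adjacent to the bridges), and then invoke the length-comparison above to force $\ell_1 = \ell_2$. Since the lemma is stated only as a necessary condition (``only if''), I do \emph{not} need to prove the converse that equal lengths imply hyperellipticity, which simplifies the task considerably and lets me focus entirely on the forward implication via the chip-firing slope analysis across the 2-edge-cut.
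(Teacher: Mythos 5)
Your guiding picture --- that chips can cross a $2$-edge-cut only in tandem, moving equal \emph{lengths} along the two edges, so that hyperellipticity forces the lengths to agree --- is exactly the mechanism of the paper's proof (its final step slides a chip along each cut edge simultaneously and observes they must reach the far side together). But the two steps you defer are precisely where the content lies, and as sketched they do not go through. First, the ``normal form'' reduction: you analyze the specific divisor $a+b$, and propose to reduce a general degree-$2$ rank-$1$ divisor $D$ to such a form ``using that rank is a linear-equivalence invariant.'' That invariance gives you nothing of the sort. Positive rank cheaply gives $D \sim a + q$ for \emph{some} point $q$, but pinning $q$ down to the endpoint $b = e_2 \cap \Gamma_1$ is the hard part, and the paper does it with reduced-divisor theory: if $q$ lay anywhere else, $a+q$ would be $x$-reduced for a suitable test point $x$ (in the interior of $e_2$, or in $\Gamma_1$) not in its support, contradicting rank $\geq 1$, since the reduced representative of a positive-rank divisor must contain the base point. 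Without this step, showing that $a+b$ itself fails to have rank $1$ when $\ell_1 \neq \ell_2$ proves nothing about $\Gamma$.

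Second, your key assertion for the unequal-length case --- that any rational function has slopes along $e_1$ and $e_2$ ``forced to be equal integers by the balancing/harmonicity of chip-firing'' --- is false. Balancing constrains only the \emph{sum} of the slopes across the cut (it equals, up to sign, the degree of the divisor on either side), not the individual slopes. For instance, on a cycle with two edges of lengths $1$ and $2$ between vertices $u$ and $v$, the function rising from $u$ to $v$ with slope $2$ on the short edge and slope $1$ on the long edge is a legitimate rational function, with $\operatorname{div} = 3u - 3v$; its slopes across the cut differ. What is true is that a function constant on both $\Gamma_1$ and $\Gamma_2$ must have equal total \emph{rise} along the two edges, and that moving a single chip along each edge while preserving effectiveness forces slope $1$ on each; but a general equivalence need not come from a function constant on the two sides, and ruling out the other possibilities is again exactly what the reduced-divisor (maximum-principle) machinery accomplishes. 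Note also that the paper first reduces to leafless, bridgeless $\Gamma$ (via the metric analogue of Corollary 5.10 of Baker--Norine's harmonic morphisms paper) so that these reducedness checks are clean; your sketch would need that, or some substitute, as well. In short: right picture, but the tool that makes it rigorous is missing, and the slope claim standing in for it is incorrect.
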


\begin{proof}
Let $e_1$ and $e_2$ be the two edges of the cut, and let $\Gamma'$ and $\Gamma''$ be the two connected components of $\Gamma\setminus\{e_1\cup e_2\}$. By the metric graph analogue of \cite[Corollary 5.10]{BN2} (which is proved in exactly the same way as in {\em loc. cit.}, using \cite[Lemma 3.11]{Ch} instead of \cite[Lemma 5.7]{BN2}),
we may assume that $\Gamma$ is leafless with no bridge edges.
Let $D = p+q$ be a divisor of degree $2$ and rank $1$ on $\Gamma$. Since $D$ has positive rank, we may assume that $p$ is on the intersection between $e_1$ and $\Gamma'$.

\medskip

We will begin by showing that $q$ is on $e_1\cup e_2$. Assume by contradiction that it is not, and choose some point $x$ in the interior of $e_2$. Then one easily checks that $D$ is $x$-reduced, contradicting the fact that $D$ has rank $1$.
Moreover, $q$ has to be on the intersection between $\Gamma'$ and $e_2$. Otherwise, choose a point $y$ in $\Gamma'$. Then again, $D$ is $y$-reduced,  and we arrive at a contradiction.

\medskip

Now, move $p$ and $q$ along the cut while preserving linear equivalence, until one of them reaches $\Gamma''$. By the same argument as before (switching the roles of $\Gamma'$ and $\Gamma''$, and of $p$ and $q$ if necessary), both $p$ and $q$ must reach $\Gamma''$ together. It follows that $e_1$ and $e_2$ have the same length.
\end{proof}

Let $C$ be a smooth tropical plane quartic. 
As discussed in Subsection \ref{quartics}, $C$ is trivalent of genus $3$, and its skeleton must have one of the first four combinatorial types depicted in Figure \ref{genus_three_graphs}.  According to \cite{Ch}, out of these four, only the ones shown in Figure \ref{figure:combinatorial_types} have hyperelliptic metric
realizations.  The following theorem shows that none of these realizations occur for the skeleton of a smooth tropical plane quartic curve.

\begin{figure}[hbt]
\centering
\includegraphics[scale=0.5]{comb_type_mickey}
\includegraphics[scale=0.5]{comb_type_one_bridge}
\includegraphics[scale=0.5]{comb_type_two_bridge}
\linebreak \includegraphics[scale=0.3]{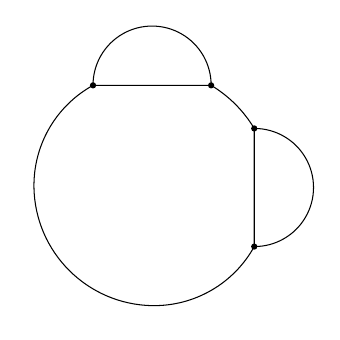}
\includegraphics[scale=0.3]{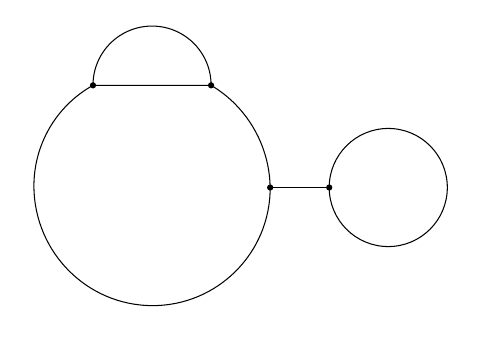}
\includegraphics[scale=0.3]{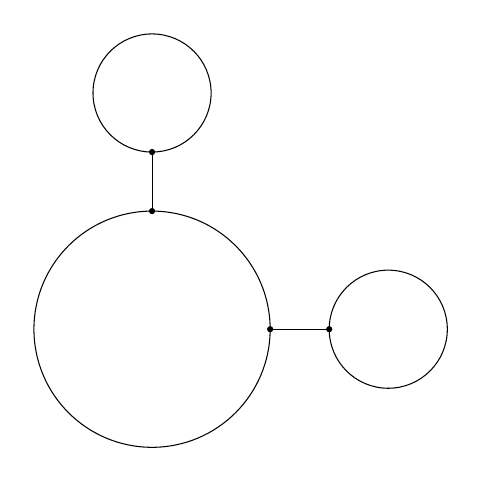}
\caption{The three combinatorial types of graphs realizable as trivalent hyperelliptic metric graphs of genus $3$, and the same graphs with lengths making them non-hyperelliptic.
}
\label{figure:combinatorial_types}
\end{figure}


\begin{theorem}\label{hyperelliptic_theorem}
Every smooth tropical plane quartic curve is non-hyperelliptic.
\end{theorem}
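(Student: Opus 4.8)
The plan is to run through the four combinatorial types available for the skeleton $\Gamma$ of $C$ by Proposition~\ref{types}. The honeycomb type requires no new work: by Chan's classification \cite{Ch} it admits no hyperelliptic metric realization (indeed it is the unique one of the four types containing no $2$-edge-cut), so a quartic with honeycomb skeleton is automatically non-hyperelliptic. The entire burden therefore falls on the Mickey Mouse, one-bridge, and two-bridge types, which are exactly the types containing a $2$-edge-cut, and this is precisely where Lemma~\ref{twoBridge} enters.

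For each of these three types I would single out a natural $2$-edge-cut $\{e_1,e_2\}$: for the Mickey Mouse graph, the pair of edges separating the two ears; and for the one-bridge and two-bridge graphs, the $2$-edge-cut exposed after contracting the bridge edges (as in the reduction to the leafless, bridgeless case at the start of the proof of Lemma~\ref{twoBridge}), which for the two-bridge graph is the pair of arcs of the middle cycle cut off by the two bridges. By Lemma~\ref{twoBridge}, if $C$ were hyperelliptic then $e_1$ and $e_2$ would have to have \emph{equal} lattice length. Thus the theorem reduces to the following embedded statement: for a smooth tropical plane quartic, the two edges of the relevant $2$-edge-cut always have lattice lengths $\ell(e_1)\neq\ell(e_2)$.

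To prove this strict inequality I would argue dually and locally. In each case $e_1$ and $e_2$ occur among the edges bounding the compact lattice polygon $F$ dual to an interior lattice point of the standard triangle $T$; the remaining boundary edges of $F$ are the edges $\Gamma$ shares with the neighbouring cycles, or, in the bridge cases, the edges meeting the bridges. The balancing condition forces the oriented edge vectors around $\partial F$ to sum to zero, and in the clean situation where $e_1,e_2$ are single line segments this closure relation expresses $\ell(e_1)-\ell(e_2)$ as $\pm$ the lattice length of a transverse edge of $F$, which is strictly positive; hence $\ell(e_1)\neq\ell(e_2)$. For the one-bridge and two-bridge types the primitive vectors entering this relation are pinned down by the fact, recorded in the proof of Proposition~\ref{keyproposition}, that every bridge edge has slope $-2$ or $-\tfrac12$, which fixes the directions of the edges leaving the cut vertices and makes the closure computation explicit.

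The step I expect to be the main obstacle is carrying out this last computation uniformly. When the cut edges are single segments the argument above is immediate, but a skeletal edge of $C$ may bend at degree-two vertices, namely points where an infinite leaf of $C$ has been contracted, so $e_1$ and $e_2$ can be polygonal paths; the closure relation then only constrains their net displacements rather than their total lengths, and equal lengths are no longer excluded on the nose. The crux is to show that the balancing data, together with the admissible integral slopes, still force a genuine length difference in every case. This is most naturally organized as a finite case analysis over the unimodular triangulations realizing each type, the same enumeration underlying the theta-characteristic computations in Appendix~\ref{section:theta_chars}.
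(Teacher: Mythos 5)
Your reduction coincides exactly with the paper's: the honeycomb type is dismissed via Chan's classification \cite{Ch}, and for the Mickey Mouse, one-bridge, and two-bridge types you invoke Lemma~\ref{twoBridge} to reduce the theorem to the claim that the two edges $e_1,e_2$ of the distinguished $2$-edge-cut always have different lengths. The problem is what comes next: you prove this inequality only in the ``clean'' case where both cut edges are single straight segments, and you explicitly defer the remaining case --- where the cut edges bend at $2$-valent vertices coming from contracted leaves --- to an unexecuted ``finite case analysis over the unimodular triangulations.'' That deferred step is not a loose end; it is the crux of the theorem (as you yourself say), because $e_2$ is in general a broken path, so the closure relation you rely on controls only its net displacement and, as you correctly observe, does not exclude equal lengths. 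As written, the proposal is therefore not a proof: the statement it actually establishes is the special case in which neither cut edge bends.

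The idea you are missing is the one the paper uses to handle bending uniformly, with no enumeration of triangulations. First, $e_1$ is \emph{always} a single straight segment: it is dual to the edge of the triangulation joining $(1,1)$ to $(2,2)$, and $(2,2)$ is the only lattice point of $T$ in the open sector at $(1,1)$ between the directions $(1,0)$ and $(0,1)$, so no bend can occur on that side of the cycle $\gamma$. Second, writing the endpoints of $e_1$ as $(a_1,b_1),(c_1,d_1)$ and those of $e_2$ as $(a_2,b_2),(c_2,d_2)$, the edges of $\gamma$ attaching the ears (resp.\ the bridge feet) displace the latter pair so that, in each of the three types, the straight-line distance from $(a_2,b_2)$ to $(c_2,d_2)$ is at least the length of $e_1$, with equality only in the two-bridge case where the endpoints coincide. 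Since any path is at least as long as the geodesic between its endpoints, and strictly longer as soon as it bends, one gets $\ell(e_2)>\ell(e_1)$ in all cases at once. In other words, the bending you treat as the main obstacle is precisely what the paper exploits: every bend makes $e_2$ longer, never shorter. Your closure-relation computation is a correct special case of this (in the all-straight Mickey Mouse configuration it gives $\ell(e_2)-\ell(e_1)$ equal to the length of an ear-attaching edge), but without the geodesic/endpoint comparison --- or a completed version of the case analysis you only gesture at --- the argument does not go through.
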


\begin{proof}

By Lemma \ref{twoBridge}, whenever a hyperelliptic metric graph contains a $2$-edge-cut, that cut consists of two edges of equal length. As we will see, this never occurs for smooth tropical plane quartic curves.

\smallskip

Consider a unimodular triangulation of the standard triangle with vertices $(0,0), (4,0), (0,4)$, corresponding to a tropical curve $C$. There are four possible combinatorial types of the skeleton $\Gamma$ of $C$, namely the first four pictured in Figure \ref{genus_three_graphs}.  The first one (the ``honeycomb'' graph) is never hyperelliptic \cite{Ch}, regardless of the length of the edges.  The other three can be hyperelliptic if given the appropriate lengths.  We will show that such lengths are never achieved by $\Gamma$. 

\smallskip

Without loss of generality, we may assume that the point $(1,1)$ corresponds to the large cycle $\gamma$ in the middle as in the second row of Figure \ref{figure:combinatorial_types}.
Each edge emanating from $(1,1)$ in the triangluation corresponds to an edge of the cycle. Figure \ref{figure:triangle_cycle} shows all possibilities for such edges and the corresponding cycle. We know that the edge from $(1,1)$ and $(2,2)$ is present in the triangulation:  some edge must separate $(2,1)$ and $(1,2)$, and any other separating edge prevents $(1,1)$ from being the middle cycle.

\smallskip

\begin{figure}
\centering
\includegraphics[scale=0.5]{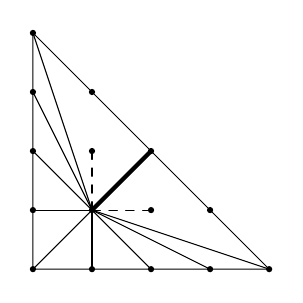}
\includegraphics[scale=0.5]{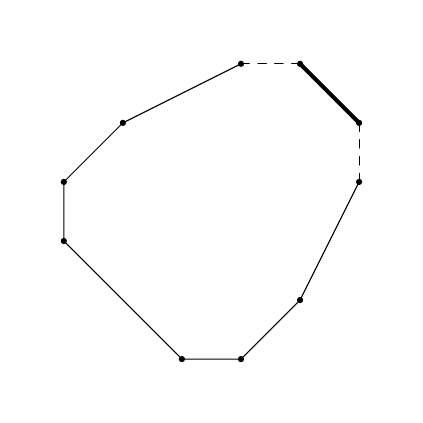}
\caption{Edges coming from $(1,1)$, and the corresponding cycle.}
\label{figure:triangle_cycle}
\end{figure}

In the three combinatorial types in question, the 2-edge-cut consists of certain edges of the cycle $\gamma$.  The first edge, which we will denote $e_1$, is dual to the edge connecting (1,1) with (2,2) in the triangulation. The other edge of the cut, denoted $e_2$, comes from the edges in the triangulation emanating from (1,1) with angles strictly between $\frac{\pi}{4}$ and $2\pi$. 
While a triangulation does not uniquely determine the lattice lengths of the edges in a tropical curve, we claim that the data of the directions of the edges suffices to conclude that $e_2$ must be longer than $e_1$.  We will show this for each of the three combinatorial types separately.  Let $\ell_1$ and $\ell_2$ be the lengths of the edges $e_1$ and $e_2$.

\begin{figure}
\centering
\includegraphics[scale=1]{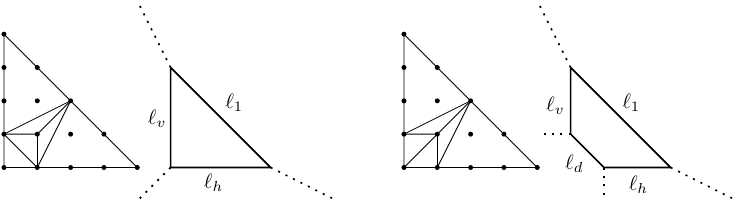}
\caption{Possibilities for the middle cycle of the two-bridge graph, with the corresponding subdivision structure}
\label{figure:two_bridge_cycle}
\end{figure}

\begin{enumerate}
\item Mickey Mouse. In this case, the cycle $\gamma$ decomposes as $e_1$-$e_h$-$e_2$-$e_v$, where $e_h$ and $e_v$ are horizontal and vertical edges with lengths $\ell_h$ and $\ell_v$, respectively.  Consider the line segments in $e_2$ that are dual to edges in the triangulation connecting $(1,1)$ with points on the horizontal edge of the Newton polygon.  The sum of the horizontal widths of these segments must be at least the sum of the horizontal widths of $e_1$ and $e_h$: otherwise the cycle $\gamma$ would not be closed.  Since these line segments in $e_2$ have integer slopes, each of them has lattice length equal to horizontal width.  The same holds for $e_1$ and $e_h$, implying $\ell_2\geq \ell_1+\ell_h>\ell_1$.

\item  One-bridge.  In this case, without loss of generality the cycle $\gamma$ decomposes as $e_1$-$e_h$-$e_2$, where $e_h$ is a horizontal edge of length $\ell_h$.  The same argument from the Mickey Mouse case holds, giving $\ell_2\geq \ell_1+\ell_h>\ell_1$.

\item Two-bridge.  In this case, $e_1$ and $e_2$ form the whole cycle $\gamma$.  Since edges must separate $(1,1)$ from $(1,2)$ and $(2,1)$ in the triangulation, there are very few possibilities for the form of $e_2$.  Either $e_2$ consists of a vertical segment and a horizontal segment with lengths $\ell_v=\ell_1$ and $\ell_h=\ell_1$, which implies $\ell_2=2\ell_1$; or $e_2$ consists of a vertical, a diagonal, and a horizontal segment with lengths $\ell_v$, $\ell_d$, and $\ell_h$.  In this latter case, $\ell_v=\ell_h$ and $\ell_d=\ell_1-\ell_v$, so $\ell_2=\ell_v+\ell_d+\ell_h=\ell_v+(\ell_1-\ell_v)+\ell_v=\ell_v+\ell_1>\ell_1$.  Both of the cases are illustrated in Figure \ref{figure:two_bridge_cycle}.
\end{enumerate}

This proves the claim, and hence the theorem.


\end{proof}


 \end{section}

 \appendix
 \begin{section}{Theta characteristics for genus $3$ graphs}\label{section:theta_chars}

 In the proof of Proposition \ref{keyproposition}, we made use of knowledge about the $7$ theta characteristics of each of the four graphs arising as a skeleton of a smooth plane quartic curve.  In Figure \ref{figure:28_theta_characteristics}, we illustrate all the theta characteristics for the four types of genus 3 graphs relevant to us. Each graph has the support of a nonzero $({\mathbf Z}/2{\mathbf Z})$-flow in bold, and in most cases the corresponding theta characteristic is illustrated as a pair of circular points.  In the cases where edge lengths might change the combinatorial position of the points of the theta characteristic, the other possibility is illustrated by a pair of crosses.  There are degenerate cases where one of these moves to a vertex, but this will not affect our arguments.  We have also taken advantage of Theorem \ref{hyperelliptic_theorem}, which allows us to assume asymmetry for the middle cycle in the second, third, and fourth columns.

\begin{figure}[p]
\centering
\includegraphics[scale=0.75]{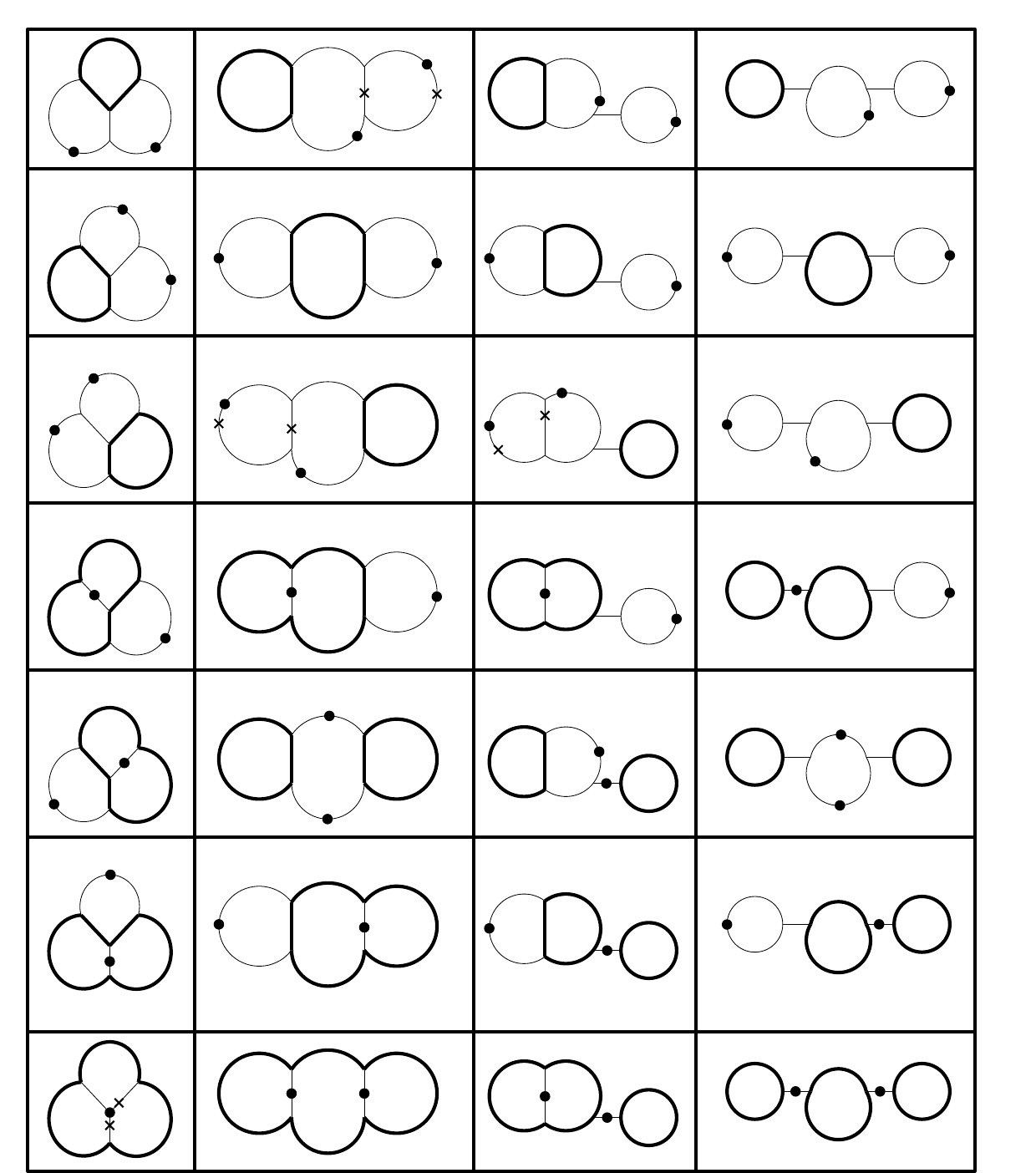}\caption{The $7$ theta characteristics for each type of graph.}
\label{figure:28_theta_characteristics}
\end{figure}

Labeling the columns 1, 2, 3, and 4 and the rows A, B, C, D, E, F, and G, we have the following classification of these $28$ theta characteristics by cases (i), (ii), and (iii) of Proposition \ref{keyproposition}:
\begin{itemize}
\item[(i)] The $20$ not falling into cases (ii) or (iii).
\item[(ii)] The $6$ theta characteristics 3E, 3F, 3G, 4D, 4F, 4G.
\item[(iii)] The $2$ theta characteristics 2E, 4E.
\end{itemize}

 \end{section}
  
\bibliographystyle{plain}
\bibliography{blmprbib}
 
 \

\noindent Matt Baker - mbaker@math.gatech.edu 
 
 \noindent
School of Mathematics, Georgia Institute of Technology 
 
  \noindent
Atlanta, GA 30332-0160 (USA)
 
 \
  
 \noindent
 Yoav Len - yoav@math.uni-sb.de 
 
 \noindent
 Fachrichtung Mathematik, Universit{\"a}t des Saarlandes,
 
  \noindent
Postfach 151150, 66041 Saarbr{\"u}cken, Germany

 \

 \noindent
Ralph Morrison -  ralphmo@kth.se

 \noindent
Department of Mathematics, KTH

  \noindent
SE-100 44  Stockholm, Sweden
 
 \

 \noindent
Nathan Pflueger -  pflueger@math.harvard.edu

 \noindent
Department of Mathematics, Harvard University
 
  \noindent
1 Oxford Street, Cambridge, MA 02138 (USA)

 \

 \noindent
Qingchun Ren -  qingchun.ren@gmail.com

 \noindent
Google Inc.
	 
  \noindent
1600 Amphitheatre Parkway, Mountain View, CA 94043 (USA)
\end{document}